\newtheorem{theorem}{Theorem}[section]
\newtheorem{corollary}[theorem]{Corollary}
\newtheorem{lemma}[theorem]{Lemma}
\newtheorem{mainthm}[theorem]{Main Theorem}
\theoremstyle{definition}
\newtheorem{definition}[theorem]{Definition}
\newtheorem{example}[theorem]{Example}
\numberwithin{equation}{section}
\newcommand{\U}{\mathcal{U}}
\newcommand{\N}{\mathbb{N}}
\newcommand{\Com}{\mathbb{C}}
\newcommand{\Ze}{\mathbb{Z}}
\newcommand{\Real}{\mathbb{R}}
\newcommand{\Rat}{\mathbb{Q}}
\newcommand{\thistheoremname}{}
\newtheorem*{genericthm*}{\thistheoremname}
\newenvironment{namedthm*}[1]
  {\renewcommand{\thistheoremname}{#1}%
   \begin{genericthm*}}
  {\end{genericthm*}}
\begin{document}

%%%%% To ease editing, for IMPAN journals add:

\baselineskip=17pt

%%%%%%%%%%%%%%%%

\title[A limiting result for the Ramsey theory of functional equations]{A limiting result for the Ramsey theory of functional equations}

\author[P. H. Arruda]{Paulo Henrique Arruda }
\address{Fakult\"{a}t f\"{u}r Mathematik\\ Universit\"{a}t Wien\\
Oskar-Morgenstern-Platz 1 \\1090 Vienna, Austria.}
\email{paulo.arruda@univie.ac.at}

\author[L. Luperi Baglini]{Lorenzo Luperi Baglini}
\address{Dipartimento di Matematica\\ Universit\`{a} di Milano\\ Via Saldini 50 \\20133 Milano, Italy.}
\email{lorenzo.luperi@unimi.it}

\subjclass[2020]{Primary 05D10, 11B75; Secondary 11D72, 11D61, 54D80.}
\keywords{partition regularity of equations; ultrafilters; Ramsey theory}

\date{}

\begin{abstract}
We study systems of functional equations whose solutions can be parameterized in function of one of the variables; our main result proves that the partition regularity (PR) of such systems can be completely characterized by the existence of constant solutions. As applications of this result, we prove the following:
\begin{itemize}
 \item A complete characterization of PR systems of Diophantine equations in two variables over $\N$. In particular, we prove that the only infinitely PR irreducible equation in two variables is $x=y$;
    \item PR of $S$-unit equations and the failure of Rado's Theorem for finitely generated multiplicative subgroups of $\mathbb{C}$; and 
    \item a complete characterization of the PR of two classes of polynomial exponential equations.
\end{itemize}
\end{abstract}

\maketitle

\section{Introduction}

This paper deals with the study of the so-called \emph{partition regularity of systems of equations}. 

\begin{definition}\label{definition:partition_regularity1}
Let $S$ be an infinite set, let $n,m\in\N$ and let $f_1,\dots,f_m:S^{n}\to S$. Let
    \begin{equation}\label{syst}
        \sigma(x_1,\dots,x_n) = 
        \left\{
        \begin{matrix}
            f_1(x_1,\dots,x_{n}), \\
            \vdots\\
            f_m(x_1,\dots,x_{n}).
        \end{matrix}
        \right.
    \end{equation}
    Given $\boldsymbol{s}=(s_1,\dots,s_m)\in S^m$, we say that the system $\sigma\left(x_1,\dots,x_{n}\right)=\boldsymbol{s}$ is partition regular (abbreviated as PR from now on) on $A\subseteq S$ if for all $r\in\N$ and all $c: S\to\{1,\dots,r\}$, there are $i\in\{1,\dots,r\}$ and $a_1,\dots,a_n\in c_{i}^{-1}(A)$ such that $\sigma(a_1,\dots,a_n)=\boldsymbol{s}$. 
\end{definition}

Whenever the context is clear, we write $\sigma\left(x_{1},\dots,x_{n}\right) \allowbreak =\boldsymbol{s}$ to mean that it is as in (\ref{syst}). When the system consists just of one equation, we will say that the equation is PR; if the equation has the form $P\left(x_{1},\dots,x_{n}\right)=0$ where $P$ is a polynomial, we say also that $P$ is PR to mean that the equation $P\left(x_{1},\dots,x_{n}\right)=0$ is PR. Functions $c:A\to\{1,\dots,r\}$ are usually referred to as colorings, and the condition $c(a_1)=\dots =c(a_{n})$ is usually rephrased as $a_{1},\dots,a_{n}$ are $c$-monochromatic. Given a $k\in\N$, $[k]$ denotes the set $\{1,\dots,k\}$.

It is well known that the study of PR properties is intertwined with the study of the ultrafilters over a set $S$. The monograph \cite{HindmanStrauss2011} provides an introduction to the topic and we assume that the reader has familiarity with the basic theory of ultrafilters. This relationship is made explicit by the following result, see \cite[Proposition 1.8]{DiNassoLuperiBaglini2018} or, more generally, \cite[Theorem 3.11]{HindmanStrauss2011}:

\begin{theorem}\label{theorem:pr_via_ultrafilters} Under the hypothesis of Definition \ref{definition:partition_regularity1}, the system \eqref{syst} is partition regular over $S$ if and only if there is an ultrafilter $\U$ on $\beta S$ such that for all $A\in\U$ there are $a_1,\dots,a_{n}\in A$ satisfying $\sigma\left(a_1,\dots,a_{n}\right)=\boldsymbol{s}$.
\end{theorem}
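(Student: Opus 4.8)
The plan is to translate partition regularity into a closure property of the family of solution sets and then obtain the ultrafilter by a standard extension argument. Write
\[
  \mathcal{S} = \{\, A \subseteq S : \text{there are } a_1,\dots,a_n \in A \text{ with } \sigma(a_1,\dots,a_n)=\boldsymbol{s} \,\}
\]
for the collection of subsets of $S$ containing a full solution of the system. Two features of $\mathcal{S}$ drive everything: it is upward closed, since a superset of a solution set is again a solution set; and, unwinding Definition \ref{definition:partition_regularity1}, the system is PR over $S$ precisely when every finite partition $S = A_1 \cup \dots \cup A_r$ has at least one cell in $\mathcal{S}$. In this language the ultrafilter demanded by the statement is exactly an ultrafilter $\U$ on $S$ (a point of $\beta S$) with $\U \subseteq \mathcal{S}$, so the theorem reduces to the equivalence: $\mathcal{S}$ is partition regular in the above sense if and only if some ultrafilter is contained in $\mathcal{S}$.

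The direction assuming the ultrafilter is immediate. Given $\U \subseteq \mathcal{S}$ and a coloring $c : S \to [r]$, the classes $c^{-1}(1),\dots,c^{-1}(r)$ form a finite partition of $S$, so exactly one of them lies in $\U \subseteq \mathcal{S}$; that class then contains a $c$-monochromatic solution, which is what PR asks for.

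For the converse I would produce $\U$ by dualizing. The point is that an ultrafilter satisfies $\U \subseteq \mathcal{S}$ if and only if it contains every set of the form $S\setminus A$ with $A\notin\mathcal{S}$: if $\U$ contained some $A\notin\mathcal{S}$ then $\U\not\subseteq\mathcal{S}$, while if $\U$ contains all such complements then it cannot contain any $A \notin \mathcal{S}$, as that would force both $A$ and $S\setminus A$ into $\U$. Accordingly I would set $\mathcal{C} = \{\, S\setminus A : A \notin \mathcal{S}\,\}$, show that $\mathcal{C}$ has the finite intersection property, and extend it to an ultrafilter via the ultrafilter lemma; that ultrafilter is then automatically contained in $\mathcal{S}$.

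The finite intersection property is the only step that genuinely uses partition regularity, and it is where I expect the single point of care. Suppose $A_1,\dots,A_k \notin \mathcal{S}$ but $\bigcap_{j=1}^{k}(S\setminus A_j) = \emptyset$, that is, $A_1\cup\dots\cup A_k = S$. Disjointifying this cover by $B_j = A_j \setminus (A_1\cup\dots\cup A_{j-1})$ gives a genuine finite partition of $S$, so partition regularity yields some $B_j \in \mathcal{S}$; but $B_j \subseteq A_j$ and upward closure then force $A_j \in \mathcal{S}$, contradicting the choice of $A_j$. Hence every finite subfamily of $\mathcal{C}$ has nonempty intersection, the extension goes through, and the construction is complete.
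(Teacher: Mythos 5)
Your proof is correct. Note that the paper does not prove this statement itself but cites it (Proposition 1.8 of Di Nasso--Luperi Baglini and Theorem 3.11 of Hindman--Strauss), and your argument --- reformulating partition regularity as a property of the upward-closed family of solution sets, dualizing to the family of complements of non-solution sets, verifying the finite intersection property via disjointification, and applying the ultrafilter lemma --- is essentially the standard compactness argument given in those references, so there is nothing to flag.
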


An ultrafilter $\U$ with the property of Theorem \ref{theorem:pr_via_ultrafilters} will be called a \emph{witness} of the partition regularity of $\sigma\left(x_{1},\dots,x_{n}\right)=0$.

The general problem of the partition regularity of systems of Diophantine equations, especially\footnote{To avoid trivialities, in this paper we convene that $\N=\{1,2,3,\dots\}$.} on $\N$, has long been studied. The linear case was settled by R. Rado in 1933 in terms of the so-called columns condition:

\begin{definition}
Let $R$ be a commutative ring, $A$ be a $m\times n$ matrix with entries in $R$ and $C_1,\dots,C_m$ be the columns of $A$; we say that $A$ satisfies the columns condition if there is a partition $I_0,\dots,I_r$ of $[m]$ such that
\begin{enumerate}
    \item $\sum_{i\in I_0}C_i=\boldsymbol{0}$; and 
    \item given any $u\in [r]$, $\sum_{i\in I_u}C_i \in \operatorname{span}_K\{C_j\mid j\in I_0\cup\dots\cup I_{u-1}\}$, where $K$ is the field of fractions of $R$ and $\operatorname{span}_K\{ C_j\mid j\in I\}$ denotes the vector subspace generated by $\{C_j\mid j\in I\}$.
\end{enumerate}
\end{definition}
Rado's Theorem for linear homogeneous systems, proven by Rado in \cite{Rado1933}, reads as follows:
\begin{namedthm*}{Homogeneous Rado's Theorem}\label{theorem:RadosTheorem}
Given an $m\times n$ matrix $A$ with rational entries, the system $A\left(x_{1},\dots,x_{n}\right)=\boldsymbol{0}$ is partition regular over $\N$ if and only if $A$ satisfies the columns condition.
\end{namedthm*}

In particular, for a single equation, Rado's Theorem tells that the equation $c_{1}x_{1}+\dots+c_{n}x_{n}=0$ is partition regular over $\N$ if and only if the following condition (known as Rado's Condition) holds: there exists a nonempty set $J\subseteq\{1,\dots,n\}$ such that $\sum_{j\in J} c_{j}=0$.

Whilst there are plenty of results regarding various aspects of partition regularity of finite and infinite systems of linear equations in the literature, progress on the general case of functional equations has been scarce and mostly (but not exclusively) concentrated in the past few years. For nonlinear polynomial equations we refer to the introductory section of \cite{DiNassoLuperiBaglini2018} for a complete list of nonlinear results proven until 2018, and to \cite{DiNassoRiggio2018,BarretMoreiraLupiniMoreira2021, FarhangiMagner2021, Moreira2017, Prendiville2021} for the latest advancements we are aware of.

Rado's Theorem for non-homogeneous linear equations, again proven in \cite{Rado1933}, reads as follows:
\begin{namedthm*}{Non-homogeneous Rado's Theorem}\label{theorem:rado_nonhom_linear}
Let $A$ be an $m\times n$ matrix with rational entries and $\boldsymbol{b}\in\Ze^m$. Then the system $A(x_1,\dots,x_n)=\boldsymbol{b}$ is PR over $\N$ if and only if either 
\begin{enumerate}
    \item There is an $a\in \N$ such that $A(a,\dots,a)=\boldsymbol{b}$; or 
    \item The matrix $A$ satisfies the columns condition and there exists $a\in \mathbb{Z}$ such that $A(a,\dots,a)=\boldsymbol{0}$.
\end{enumerate}
\end{namedthm*}

\begin{definition}
    Under the notations of Definition \eqref{definition:partition_regularity1}, we say that the system $\sigma(x_1,\dots,x_n)=\boldsymbol{s}$ admits a constant solution if one can find an $a\in A$ such that $\sigma(a,\dots,a)=\boldsymbol{s}$.
\end{definition}

In particular, for a single equation $c_1x_1+\cdots +c_nx_n=b$, Rado's Theorem states that this equation is PR over $\N$ iff either it has a constant solution in $\N$, or it has a constant solution in $\Ze$ and there is a non-empty $J\subseteq [n]$ such that $\sum_{i\in J}c_i=0$. 

It is easy to see that if a system $\sigma\left(x_1,\dots,x_{n}\right)=\boldsymbol{s}$ admits a constant solution, then it is PR, as constant solutions are obviously monochromatic\footnote{In fact, in many papers only strengthened notions of partition regularity are considered where only nonconstant solutions or even more restrictive limitations are imposed. See e.g. \cite{DiNassoLuperiBaglini2018}.}. In \cite{HindmanLeader2006}, N. Hindman and I. Leader proved conditions under which linear systems are non-trivially PR, i.e. PR linear systems that admit solutions other than constants. In particular, when $n\geq 3$ they proved that a linear equation $c_{1}x_{1}+\dots+c_{n}x_{n}=0$ is PR if and only if it is non-trivially PR\footnote{Actually, something strong holds: for $n\geq 3$, one has that in any finite coloring of $\N$ there exist monochromatic injective solutions of the equation, namely solutions where all the entries are mutually distinct. When $n=2$ of course this cannot happen as, by Rado's Theorem, the only PR linear equation in two variables is $x-y=0$. We will show in Section \ref{section:applications} that the same remains true if we substitute "linear" with "irreducible".}. By contrast, our main result shows that whenever the solutions of a system $\sigma(x_1,\dots,x_{n})=\boldsymbol{s}$ can be parameterized in function of one of the variables\footnote{Henceforth assumed without loss of generality to be $(n+1)$-th variable.}, to be PR is equivalent to admit a constant solution. 

\begin{mainthm}\label{theorem:main_theorem}
Let $S$ be an infinite set, $m,n\in\N$, $\boldsymbol{s}=(s_1,\dots,s_m)\in S^m$, and $f_1,\dots,f_m:S^{n+1}\to S$. Let $\sigma(x_1,\dots,x_n,x_{n+1})=\boldsymbol{s}$ be a system as in \eqref{syst}. Suppose that there exists $k\in\N$ such that for all $s\in S$ the number of solutions in the variables $x_1,\dots,x_n$ of the system $\sigma\left(x_{1},\dots,x_{n},s\right)=\boldsymbol{s}$ is at most $k$. Then, the following are equivalent:
\begin{enumerate}
    \item The system is PR over $S$;
    \item The set of witnesses of the PR of the system is non-empty and for all witnesses $\U$ of the PR of the system there exists a $B\in \U$ such that the only solutions of the system in $B$ are constant; and
    \item There is a coloring $c_0$ of $S$ such that: (a) the system has a $c_0$-monochromatic solution, and (b) all $c_0$-monochromatic solutions the system are constant.
\end{enumerate}
 
\end{mainthm}

Although we will apply our result to some explicit classes of nonlinear or exponential polynomials, we notice that Theorem \ref{theorem:main_theorem} is purely set-theoretical: it holds for arbitrary systems of functional equations on infinite sets. Theorem \ref{theorem:main_theorem} is proven in Section \ref{section:proofs} and its applications are listed in Section \ref{section:applications}, which we briefly discuss below.

The first application, see Theorem \ref{theorem:main2}, is an extension of Rado's Theorem for the PR of systems of Diophantine equations in two variables over $\N$, thus closing the problem for two variables. Summarizing, to the best of our knowledge, the only nonlinear classes whose partition regularity over $\N$ has been completely characterized are\footnote{The situation is slightly different when it comes to the partition regularity over $\Real,\Com$; see e.g. \cite{LuperiBaglini2021} for a discussion of this fact.}${}^{,}$\footnote{The partition regularity of the cases discussed below disconsider constant solutions, i.e. these characterizations ensure the existence of non-constant solutions.}:
	\begin{enumerate} 
		\item equations of the form $\sum_{i=1}^{n}c_{i}x_{i}=P(y)$, for $P(y)$ nonlinear polynomial with no constant term (see \cite[Corollary 3.14]{DiNassoLuperiBaglini2018}); these equations are partition regular if and only if the linear part satisfies Rado's condition, namely if and only if there exists a nonempty $I\subseteq [n]$ such that $\sum_{i\in I}c_{i}=0$;
		\item equations of the form $\sum_{i=1}^{h(n)}c_{i}x_{i}^{n}=0$ for $h(n)$ large enough (namely, $h(n)\geq (1+o(n))n\log n$, see \cite{ChowLindqvistPrendiville2021} for details), which are partition regular over $\N$ if and only if there exists a nonempty $I\subseteq [n]$ such that $\sum_{i\in I}c_{i}=0$;
		\item equations of the form $\sum_{i=1}^{s}a_{i}x{i}^{2}=\sum_{j=1}^{t}b_{j}y_{j}$, where the $a_{i},b_{j}$ are non-zero integers, except those of the form $a_{1}\left(x_{1}^{2}-x_{2}^{2}\right)=a_{2}x_{3}^{2}+b_{1}y_{1}$, for which the partition regularity is still unknown (see \cite[Theorem 1.10]{Prendiville2021}). These equations are partition regular if and only if there exists $I\neq\emptyset$ such that $\sum_{i\in I}a_{i}=0$ or $\sum_{j\in I}b_{j}=0$.
	\end{enumerate}

Rado's Theorem is known to hold in different settings such as subrings of $\Com$ (see \cite{Rado1945}), several families of commutative rings (see \cite{BergelsonDeuberHindmanLefmann1994}) and for Abelian groups (see \cite{Deuber1975}); recently, it has also been proven for infinite integral domains (see \cite{ByszewskiKrawczyk2021}). Our second application, see Corollary \ref{corollary:S-unit-equations_Non_PR}, concerns the partition regularity of linear equations over finite rank multiplicative subgroups of $\Com^\times$. We observe an interesting phenomenon in this setting: the failure of Rado's Theorem for this class of equations; in particular, we prove that such sets are additive combinatorial small in the sense that they do not contain solutions of Schur's Equation or $3$-terms arithmetic progressions.

Our third application deals with polyexponential equations over algebraic number fields\footnote{I.e. a finite field extension of the field of rational numbers.}, see Theorem \ref{theorem:PR_exponential_equation}. In recent years, starting with the work of A. Sisto \cite{Sisto2011} and then extended by \cite{Sahasrabudhe2018a}, a relevant problem in the area has been the characterization of the partition regularity of polyexponential equations (and configurations), namely equations obtained by finite compositions of exponentiation, sum and product. This problem is far from being well understood, with only a few positive and even less negative results proven, such as 
\begin{enumerate}
    \item The configuration $\{x,y,x^y,xy\}$ is PR (see \cite[Theorem 2]{Sahasrabudhe2018a}). In particular, from this it follows that the equations $x=y^z$ and $xy=z^w$ are PR; and 
    \item The configuration $\{x,y,x^y,a,b,a+b\}$ is not PR (see \cite[Theorem 6]{Sahasrabudhe2018a}). 
\end{enumerate}

Applying Theorem \ref{theorem:main_theorem}, we study two classes of polyexponential equations whose characters are mutually coprime and characterize their PR in terms of the existence of constant solutions; precisely, we consider equations of the forms
\begin{equation*}
    P_1(\boldsymbol{x})f_1(y)\boldsymbol{\alpha}_1^{\boldsymbol{x}}+\cdots +P_m(\boldsymbol{x})f_m(y)\boldsymbol{\alpha}_m^{\boldsymbol{x}}=0,
\end{equation*}
and 
\begin{equation*}
 Q_1(\boldsymbol{x},y)\boldsymbol{\alpha}_1^{\boldsymbol{x}}+\cdots +Q_m(\boldsymbol{x},y)\boldsymbol{\alpha}_m^{\boldsymbol{x}}=0,
\end{equation*}
where all $P_i$'s and $Q_i$'s are polynomial over the rational numbers, $\boldsymbol{\alpha}_i=(\alpha_{i1},\dots,\alpha_{in})\in(\Ze\setminus\{0\})^n$ are such that 
\begin{equation*}
    \gcd\{\alpha_{ij}\mid i\in[m] \text{ and } j\in[m]\}=1,
\end{equation*}
and $\boldsymbol{\alpha}_i^{\boldsymbol{x}}=\alpha_{i1}^{x_1}\cdots\alpha_{in}^{x_n}$.
\section{Proof of the main result}\label{section:proofs}

In this section, we provide the proof of our main result by means of ultrafilters. It will be necessary to quantify how much \emph{injectivity} we can possibly find in a given solution of a given system; more precisely:

\begin{definition}\label{definition:injectivity}\cite[Definition 1.4]{DiNassoLuperiBaglini2018}
Under the conditions of Definition \ref{definition:partition_regularity1}, we say that the system $\sigma(x_1,\dots,x_n)=\boldsymbol{s}$ is PR over $S$ with injectivity $\geq r$ ($r\in[n]$), if for all colorings $c$ of $S$ one can find $c$-monochromatic $a_1,\dots,a_n\in S$ such that $|\{a_1,\dots,a_n\}|\geq r$.
\end{definition}

As expected, being PR with a certain injectivity is equivalent to having a witness of the PR with such injectivity:

\begin{theorem}\cite[Proposition 1.8]{DiNassoLuperiBaglini2018}\label{theorem:lorenzo&mauro}
Under the conditions of Definition \ref{definition:partition_regularity1}, the system $\sigma(x_1,\dots,x_n)=\boldsymbol{s}$ is PR over $S$ with injectivity $\geq r$ if and only if there is an ultrafilter $\U\in\beta S$ such that for all $A\in\U$ one can find a solution $a_1,\dots,a_n\in A$ of the system and $|\{a_1,\dots,a_n\}|\geq r$.
\end{theorem}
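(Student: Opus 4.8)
The statement is an equivalence, and the reverse implication is the routine one, so the plan is to dispose of it first. Suppose such a witness $\U$ exists and let $c\colon S\to\{1,\dots,\ell\}$ be an arbitrary finite coloring. The color classes $c^{-1}(1),\dots,c^{-1}(\ell)$ partition $S$, and since $S\in\U$ and $\U$ is an ultrafilter, exactly one class $c^{-1}(i_0)$ belongs to $\U$. By the defining property of the witness, $c^{-1}(i_0)$ contains a solution $a_1,\dots,a_n$ with $|\{a_1,\dots,a_n\}|\geq r$; these entries are monochromatic of color $i_0$, so the system is PR with injectivity $\geq r$.

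For the forward implication I would pass to the language of partition-regular families of finite sets. Let
\[
\mathcal{A}=\bigl\{\{a_1,\dots,a_n\}\subseteq S : (a_1,\dots,a_n)\text{ is a solution with }|\{a_1,\dots,a_n\}|\geq r\bigr\},
\]
so that each member of $\mathcal{A}$ is a finite set of size between $r$ and $n$. In this language, PR with injectivity $\geq r$ says precisely that whenever $S$ is finitely colored some color class contains a member of $\mathcal{A}$, while the sought witness is exactly an ultrafilter all of whose members contain some member of $\mathcal{A}$ as a subset. Note that the uniform bound $k$ on the number of solutions plays no role here; it is only needed for the Main Theorem.

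To produce the witness I would argue by compactness of $\beta S$ rather than by a direct Zorn's lemma construction. Call $B\subseteq S$ \emph{bad} if it contains no member of $\mathcal{A}$, and let $\mathcal{B}$ be the collection of bad sets; subsets of bad sets are bad, so $\mathcal{B}$ is downward closed. An ultrafilter is a witness exactly when it contains no bad set, i.e.\ when it lies in $D=\bigcap_{B\in\mathcal{B}}\{p\in\beta S : S\setminus B\in p\}$, an intersection of basic clopen sets, hence closed. By compactness it suffices to verify the finite intersection property: for $B_1,\dots,B_t\in\mathcal{B}$ the intersection $\bigcap_{i}\{p : S\setminus B_i\in p\}$ is nonempty iff $B_1\cup\dots\cup B_t\neq S$. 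This is where the hypothesis enters: refining $\{B_1,\dots,B_t\}$ to a genuine $t$-coloring of $S$ (each cell a subset of some $B_i$, hence still bad), PR with injectivity $\geq r$ furnishes a monochromatic member of $\mathcal{A}$ inside one cell, contradicting badness; thus no finite union of bad sets covers $S$. Hence $D\neq\emptyset$, and any $\U\in D$ is the desired witness.

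The only genuinely delicate point — and the one I would flag as the main obstacle — is this finite-intersection/covering step, since it is there that one must convert the coloring formulation of partition regularity into a covering statement and use that a subset of a bad set is again bad. Everything else is bookkeeping, and the whole argument is in any case subsumed by the general criterion \cite[Theorem 3.11]{HindmanStrauss2011} already cited in the introduction, specialized to the family $\mathcal{A}$.
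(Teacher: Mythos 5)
Your proof is correct. Note that the paper itself gives no proof of this statement --- it is imported verbatim from \cite[Proposition 1.8]{DiNassoLuperiBaglini2018} --- so there is nothing internal to diverge from; your argument (one color class of any coloring lies in the witness, for the easy direction; and, for the converse, compactness of $\beta S$ applied to the downward-closed family of bad sets, with partition regularity with injectivity $\geq r$ ruling out a finite cover of $S$ by bad sets) is precisely the standard proof of such equivalences, the same route as in the cited \cite[Theorem 3.11]{HindmanStrauss2011}.
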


Additionally, we employ a well-known property of ultrafilters related to fixed points of functions in $\beta S$:

\begin{theorem}\label{fix point}\cite[Theorem 3.35]{HindmanStrauss2011} 
Let $\U\in\beta S$ and let $f:S\rightarrow S$. Let $\overline{f}:\beta S\rightarrow \beta S$ be the continuous extension of $f$ to $\beta S$. Then $\overline{f}(\U)=\U$ if and only if there exists $A\in\U$ such that $f(a)=a$ for all $a\in A$.
\end{theorem}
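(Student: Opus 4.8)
The plan is to prove Theorem \ref{fix point} directly from the universal property and the basic topology of the Stone--\v{C}ech compactification $\beta S$, treating $S$ as a discrete space so that $\beta S$ is the space of ultrafilters on $S$ and $\overline{f}$ is the unique continuous map extending $f$. I will establish the two implications separately.

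For the ``if'' direction, suppose there is $A\in\U$ with $f(a)=a$ for every $a\in A$. I would compare $\overline{f}(\U)$ and $\U$ as ultrafilters by showing they contain the same sets. The key fact is the standard description of the continuous extension: for any $B\subseteq S$, one has $B\in\overline{f}(\U)$ if and only if $f^{-1}(B)\in\U$. Given any $B\in\U$, the set $A\cap B$ lies in $\U$; since $f$ is the identity on $A$, every $a\in A\cap B$ satisfies $f(a)=a\in B$, so $A\cap B\subseteq f^{-1}(B)$, whence $f^{-1}(B)\in\U$ and therefore $B\in\overline{f}(\U)$. This gives $\U\subseteq\overline{f}(\U)$, and since both are ultrafilters (maximal filters), equality $\overline{f}(\U)=\U$ follows.

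For the ``only if'' direction, suppose $\overline{f}(\U)=\U$ but, toward a contradiction, that no $A\in\U$ consists entirely of fixed points; equivalently, the set $D=\{s\in S\mid f(s)\neq s\}$ meets every member of $\U$, which (as $\U$ is an ultrafilter) means $D\in\U$. The main obstacle is extracting a usable combinatorial structure from $D$: I want to partition $D$ into finitely many pieces on each of which $f$ behaves in a controlled, fixed-point-free way. The clean tool here is the classical three-coloring lemma for a fixed-point-free function, namely that one can write $D=C_1\cup C_2\cup C_3$ with $f(C_i)\cap C_i=\emptyset$ for each $i$. Since $D\in\U$ and $\U$ is an ultrafilter, some $C_i\in\U$; call it $C$. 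Then $f(C)\cap C=\emptyset$, so $C\notin\overline{f}(\U)$: indeed $f^{-1}(C)$ is disjoint from $C$ because $f(C)\cap C=\emptyset$ forces $C\cap f^{-1}(C)=\emptyset$, hence $f^{-1}(C)\notin\U$ and $C\notin\overline{f}(\U)$. But $C\in\U=\overline{f}(\U)$, a contradiction. Therefore such an $A\in\U$ must exist.

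The step I expect to be the crux is the three-coloring lemma, since the rest is a routine manipulation of the identity $B\in\overline{f}(\U)\iff f^{-1}(B)\in\U$. I would either cite it as the standard fact underlying this result or sketch it by a short transfinite/greedy argument partitioning $S$ so that no point shares a color with its image. Once that combinatorial input is in place, the ultrafilter maximality does all the remaining work, and both implications close cleanly.
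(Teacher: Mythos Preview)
The paper does not supply its own proof of this statement; it is quoted verbatim from \cite[Theorem~3.35]{HindmanStrauss2011} and used as a black box. Your argument is correct and is essentially the standard proof given there: the ``if'' direction is the routine computation with $B\in\overline{f}(\U)\iff f^{-1}(B)\in\U$, and the ``only if'' direction rests on the three-set decomposition of the non-fixed-point set (Kat\v{e}tov's lemma, which appears as Lemma~3.33 in \cite{HindmanStrauss2011}) together with ultrafilter maximality. One small point worth tightening in a written version: when you invoke the three-coloring lemma on $D=\{s:f(s)\neq s\}$, note that $f\restriction D$ need not map into $D$, but this is harmless since $C_i\subseteq D$ forces $f(c)\notin C_i$ automatically whenever $f(c)\notin D$; the coloring only has to separate $c$ from $f(c)$ for those $c\in D$ with $f(c)\in D$, and the usual greedy/transfinite argument handles that.
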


Before starting the proof of Theorem \ref{theorem:main_theorem} we prefer to isolate the following general Lemma we will need, which basically states that a conjunction of properties is PR if only if at least one of the properties is PR.

\begin{lemma}\label{easy} Let $\U\in\beta S$ and let $\varphi_{1}\left(x_{1},\dots,x_{n}\right),\dots,\varphi_{k}\left(x_{1},\dots,x_{n}\right)$ be properties on $S$. The following are equivalent:

\begin{enumerate}
    \item Given any $A\in\U$ there exists $a_1,\dots,a_n\in A$ and $j\in [k]$ such that $\varphi_j(a_1,\dots,a_n)$ is satisfied; and 
    \item there exists a $j\in[k]$ such that for all $A\in\U$ one can find $a_1,\dots,a_n\in A$ satisfying $\varphi_j(a_1,\dots,a_n)$.
\end{enumerate}
\end{lemma}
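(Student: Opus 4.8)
The plan is to split the biconditional into its two implications, with essentially all of the content lying in $(1)\Rightarrow(2)$; the reverse implication is immediate and the forward one rests solely on the fact that a filter is closed under finite intersections. For $(2)\Rightarrow(1)$, I would simply observe that if some fixed $j_{0}\in[k]$ has the property that every $A\in\U$ contains a tuple satisfying $\varphi_{j_{0}}$, then for an arbitrary $A\in\U$ we may take exactly that tuple together with the index $j=j_{0}$, which is precisely what $(1)$ demands.

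For the substantive direction $(1)\Rightarrow(2)$, I would argue by contradiction. Assume $(1)$ holds but $(2)$ fails. The negation of $(2)$ says that for \emph{each} $j\in[k]$ there is a set $A_{j}\in\U$ such that no tuple $a_{1},\dots,a_{n}\in A_{j}$ satisfies $\varphi_{j}$. Because $[k]$ is finite and $\U$ is a filter, the finite intersection $A:=\bigcap_{j=1}^{k}A_{j}$ again lies in $\U$. Applying hypothesis $(1)$ to this particular $A$ produces a tuple $a_{1},\dots,a_{n}\in A$ and some index $j\in[k]$ with $\varphi_{j}(a_{1},\dots,a_{n})$ true. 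Since $A\subseteq A_{j}$, these $a_{1},\dots,a_{n}$ lie in $A_{j}$, contradicting the defining property of $A_{j}$. This contradiction forces $(2)$.

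The one genuinely load-bearing step — and the only place the hypothesis that $k$ is finite is used — is the passage from the individual witnessing sets $A_{1},\dots,A_{k}$ to their intersection $\bigcap_{j=1}^{k}A_{j}\in\U$; were $k$ allowed to be infinite this step would break down, since a filter need not be closed under infinite intersections. I do not expect any serious obstacle otherwise: the lemma is in essence a pigeonhole observation packaged through the filter axioms, and it is worth remarking that the argument never uses that $\U$ is an ultrafilter, only that it is a filter, so the statement holds at that greater level of generality.
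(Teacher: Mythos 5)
Your proof is correct and follows essentially the same route as the paper's: negate (2), intersect the finitely many witnessing sets $A_1,\dots,A_k$ using closure of $\U$ under finite intersections, and derive a contradiction with (1), the reverse implication being immediate. Your added remarks (that only the filter property is used, and that finiteness of $k$ is essential) are accurate but do not change the argument.
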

\begin{proof}
    $(1)\Rightarrow (2)$ Assuming the negation of $(2)$, for all $j\in[k]$ one can find $A_j\in\U$ satisfying $\neg \varphi_{j}(a_1,\dots,a_n)$ whenever $a_1,\dots,a_n\in A_j$; since $A=A_1\cap \dots\cap A_k\in\U$ and $\neg\varphi_j(a_1,\dots,a_n)$ is satisfied for all $j\in [k]$, we arrive to a contradiction with $(1)$. 

    The implication $(2)\Rightarrow (1)$ is immediate.
\end{proof}

With Theorems \ref{theorem:lorenzo&mauro} and \ref{fix point}, and Lemma \ref{easy}, we can prove Theorem \ref{theorem:main_theorem}.

\begin{proof}[Proof of Theorem \ref{theorem:main_theorem}] (1)$\Rightarrow$(2). By the hypothesis, the following property holds:
\begin{quote}
\begin{description}
    \item[(P$_0$)] There are functions $\psi_1,\dots,\psi_k:S\to S^n$ such that whenever $a_1,\dots,a_n,a_{n+1}\in S$ is a solution of the system $\sigma(x_1,\dots,x_n,x_{n+1})=\boldsymbol{s}$ one can find a $j\in[k]$ satisfying $\psi_j(a_{n+1})=(a_1,\dots,a_n)$.
\end{description}  
\end{quote}
Let thus $\varphi_j(x_1,\dots,x_n,x_{n+1})$ be the formula 
\begin{equation*}
    \sigma(x_1,\dots,x_n,x_{n+1})=\boldsymbol{s} \text{ and } \psi_j(x_{n+1})=(x_1,\dots,x_n).
\end{equation*}
As $\sigma(x_1,\dots,x_n,)=\boldsymbol{s}$ is PR over $S$, it has a witness $\U\in\beta S$ of its PR. Fix this witness. Given $A\in\U$, we claim that there are $j\in[k]$ and $\boldsymbol{a}=(a_1,\dots,a_n,a_{n+1})\in A^{n+1}$ such that $\varphi_j(\boldsymbol{a})$ is satisfied; to prove this claim, we proceed by contradiction: if the contrary happens, there is an $A\in\U$ such that for all $j\in[k]$ and $\boldsymbol{a}\in A^{n+1}$ the property $\varphi_j(\boldsymbol{a})$ is not satisfied. By property (P$_0$) above, this means that $A$ cannot contain a solution of the system $\sigma(x_1,\dots,x_n,x_{n+1})=\boldsymbol{s}$ which contradicts the fact that $\U$ witnesses the PR of such system. 

By Lemma \ref{easy}, there is a $j_0\in[k]$ such that for all $A\in\U$ the following property is satisfied:
\begin{quote}
\begin{description}
    \item[(P$_1$)] There exists an $\boldsymbol{a}\in A^{n+1}$ satisfying $\varphi_{j_0}(\boldsymbol{a})$.
\end{description}  
\end{quote}
Given any $A\in\U$, we claim that the set
\begin{equation*}
    A_{j_0} := \left\{a\in A: \exists (a_1,\dots,a_n)\in A^n \big(\varphi_{j_0}(a_1,\dots,a_n,a)\big)\right\}
\end{equation*}
is a member of $\U$. To prove this claim, we proceed by contradiction: if not, then $A' = A\cap (S\setminus A_{j_0})\in \U$. But it is easily noticeable that $A'$ does not satisfy property (P$_1$), which is a contradiction. 

For any given $A\in \U$ and $i\in[k]$, define 
\begin{equation*}
    A_{i,j_0} = \left\{a\in A : \pi_i\circ \psi_{j_0}(a)\in A\right\},
\end{equation*}
where $\pi_i:S^n\to S$ is the projection onto the $i$th coordinate. One can easily verify that $A_{j_0}\subseteq A_{i,j_0}$ and thus $A_{i,j_0}\in\U$. This proves that for all $i\in[n]$ one has that $\pi_i\circ\psi_{j_0}(\U)=\U$; by Theorem \ref{fix point}, there is a $B_i\in\U$ such that $\pi_i\circ\psi_{j_0}$ is the identity when restricted to $B_i$, and thus the set
\begin{equation*}
    B := \bigcap_{i=1}^{n} (B_i)_{j_0}
\end{equation*}
is the searched set. Indeed, if $(b_1,\dots,b_n,b_{n+1})\in B^{n+1}$ is a solution of the system, by property (P$_1$), one has that $\psi_{j_0}(b_{n+1})=(b_1,\dots,b_n)$. It thus must be the case that for any $i\in[n]$ one has that $b_i = \pi_i\circ\psi_{j_0}(b_{n+1})=b_{n+1}$.

(2)$\Rightarrow$ (3). To prove this implication, we proceed by contradiction. Since $\U$ is a witness of the PR of the system, we know that there are monochromatic solutions of any coloring $c$ of $S$; suppose that for all colorings $c$ of $S$ there is a $c$-monochromatic non-constant solution $a_1,\dots,a_n,a_{n+1}\in S$ to the system. This is easily seen as equivalent to saying that the system $\sigma(x_1,\dots,x_n,x_{n+1})=\boldsymbol{s}$ is PR over $S$ with injectivity $\geq 2$. By Theorem \ref{theorem:lorenzo&mauro}, one can find a witness $\U\in\beta S$ to this fact; but this is a contradiction with the hypothesis as $\U$ must contain a set $B$ in which all solutions of the system are constant.

(3)$\Rightarrow$(1). Since all $c_0$ monochromatic solutions of the system are constant (which exists by the hypothesis), the system is PR for trivial reasons.
\end{proof}

As a trivial consequence, we also get the following result.

\begin{corollary} In the same hypotheses of Theorem \ref{theorem:main_theorem}, for all $\U\in\beta S$ the following are equivalent:
\begin{enumerate}
\item $\U$ is a witness of the partition regularity of $\sigma\left(x_{1},\dots,x_{n}\right)=\boldsymbol{s}$;
\item $\Delta_{\sigma}\in\U$, where
\begin{equation*}
    \Delta_{\sigma} = \{a\in S\mid \sigma(a,\dots,a)=\boldsymbol{s}\}.
\end{equation*}
\end{enumerate}
\end{corollary}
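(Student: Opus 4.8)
The plan is to establish the two implications separately: $(2)\Rightarrow(1)$ is immediate from the ultrafilter axioms, while $(1)\Rightarrow(2)$ is where the content of Theorem~\ref{theorem:main_theorem} is genuinely used.

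For $(2)\Rightarrow(1)$, I would assume $\Delta_\sigma\in\U$ and verify the witness property directly. Given any $A\in\U$, the intersection $A\cap\Delta_\sigma$ again belongs to $\U$, hence is nonempty since $\emptyset\notin\U$; choosing $a\in A\cap\Delta_\sigma$, the constant tuple $(a,\dots,a)$ is by definition of $\Delta_\sigma$ a solution of the system with every entry in $A$. As $A\in\U$ was arbitrary, $\U$ witnesses the partition regularity of the system. This direction uses nothing beyond the fact that $\U$ is a filter not containing $\emptyset$.

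For $(1)\Rightarrow(2)$, suppose $\U$ is a witness; then in particular the system is PR, so the hypotheses of Theorem~\ref{theorem:main_theorem} are in force and its item (2) applies to $\U$ itself, furnishing a set $B\in\U$ in which every solution of the system is constant. I would finish by contradiction: if $\Delta_\sigma\notin\U$ then $S\setminus\Delta_\sigma\in\U$, and so $B':=B\cap(S\setminus\Delta_\sigma)\in\U$. Because $\U$ is a witness, $B'$ must contain a solution of the system, and since $B'\subseteq B$ this solution is constant, with all entries equal to some $a\in B'$. Then $\sigma(a,\dots,a)=\boldsymbol{s}$ forces $a\in\Delta_\sigma$, contradicting $a\in B'\subseteq S\setminus\Delta_\sigma$; therefore $\Delta_\sigma\in\U$.

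I expect the only substantive step to be the invocation of item (2) of Theorem~\ref{theorem:main_theorem}, which is what upgrades the bare witness property into the existence of a set $B\in\U$ carrying only constant solutions. This is precisely the place where the finiteness assumption on the number of solutions per value of the parameter variable enters: without it a witness could realize partition regularity through nonconstant monochromatic solutions, and the conclusion $\Delta_\sigma\in\U$ would fail. Everything else is routine ultrafilter bookkeeping, which is why the statement can fairly be called a trivial consequence of the Main Theorem.
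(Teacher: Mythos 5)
Your proof is correct and is exactly the argument the paper has in mind (the paper states the corollary without proof, calling it a trivial consequence): the direction $(2)\Rightarrow(1)$ is pure ultrafilter bookkeeping, and $(1)\Rightarrow(2)$ follows by applying implication $(1)\Rightarrow(2)$ of Theorem \ref{theorem:main_theorem} to the given witness $\U$ to obtain $B\in\U$ with only constant solutions, then intersecting with $S\setminus\Delta_{\sigma}$ to reach a contradiction. Your closing remark correctly pinpoints that the boundedness hypothesis is what makes this work, since otherwise (e.g.\ for Schur's equation $x+y=z$ over $\N$) a witness can exist while $\Delta_{\sigma}$ is empty.
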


\section{Applications}\label{section:applications}

\subsection{PR of equations in two variables}
We prove that a given system of polynomial equations in two variables over $\Ze$ is PR over $\N$ if and only if it admits constant solutions. In what follows, $\sigma(x,y)=\boldsymbol{0}$ will denote a system of polynomial equations. In such a situation, the best we can hope for in terms of PR is to have a stronger property, that is to any given finite coloring of $\N$ there are infinitely many monochromatic constant solutions; in this case, we say that $\sigma(x,y)=\boldsymbol{0}$ is infinitely PR. As we are going to show, the only irreducible infinitely PR polynomial in $\Ze[x,y]$ is $x-y$. Since the only ingredient needed to show this fact together with Theorem \ref{theorem:main_theorem} is that polynomials in one variable over $\Ze$ have a uniform bound on their number of solutions that only depends on the degree of the polynomial, the same result holds \emph{mutatis mutandis} for any infinite subset of a given infinite integral domain. 

Let us first observe that, when the system consists of homogeneous equations, this result holds trivially as homogeneous polynomials factorize as products of linear polynomials over $\Com$ (or the algebraic closure of the field of fractions of the integral domain), so we could directly conclude by Rado's Theorem for integral domains and the fact that a polynomial is PR if and only if at least one of its irreducible factors is (see e.g. \cite[Theorem 3.7]{LuperiBaglini2015} and \cite[Theorem A]{ByszewskiKrawczyk2021}).

However, Theorem \ref{theorem:main_theorem} allows a very simple proof of the general case.

\begin{theorem}\label{theorem:main2} 
Let $P_{1},\dots,P_{m}\in \Ze\left[x,y\right]$ be polynomials having degree $\geq 1$ and $\sigma(x,y)=\boldsymbol{0}$ the system
    \begin{equation*}\label{equation:system1}
    \left\{
        \begin{matrix}  
            P_{1}\left(x,y\right)=0,\\
            \vdots\\ 
            P_{m}\left(x,y\right)=0.
        \end{matrix}
        \right.
        \tag{$\star$}
    \end{equation*}
The following facts are equivalent:
\begin{enumerate}
\item The system $\sigma(x,y)=\boldsymbol{0}$ has a constant solution;
\item The system $\sigma(x,y)=\boldsymbol{0}$ is PR over $\N$.
\end{enumerate}

\end{theorem}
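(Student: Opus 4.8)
The plan is to apply the Main Theorem directly, so the main task is to verify its hypothesis for the system $\sigma(x,y)=\boldsymbol{0}$ in two variables over $\N$. The equivalence $(1)\Leftrightarrow(2)$ of Theorem~\ref{theorem:main2} should follow as a special case of the equivalence $(1)\Leftrightarrow(3)$ (or $(1)\Leftrightarrow(2)$) of Theorem~\ref{theorem:main_theorem} with $S=\N$, $n=1$, and $x_{n+1}=y$ playing the role of the parameterizing variable. Indeed, with $n=1$ the system has the form $\sigma(x,y)=\boldsymbol{0}$, and a constant solution in the sense of Theorem~\ref{theorem:main_theorem} is precisely a constant solution in the sense of Theorem~\ref{theorem:main2}. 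The one direction $(1)\Rightarrow(2)$ is trivial (constant solutions are monochromatic under any coloring), but I will get it for free from the Main Theorem anyway.

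The crux is verifying the finiteness hypothesis of the Main Theorem: there must exist a uniform bound $k\in\N$ such that, for every $s\in\N$, the number of solutions $x$ of the system $\sigma(x,s)=\boldsymbol{0}$ is at most $k$. First I would reduce to a single equation: the simultaneous system $\sigma(x,y)=\boldsymbol{0}$ is solved in $x$ (for fixed $y=s$) only if $P_1(x,s)=0$, so it suffices to bound the number of roots of the single univariate polynomial $p_s(x):=P_1(x,s)\in\Ze[x]$. Here the key point is that $\deg P_1\geq 1$, and I must split into two cases according to whether $P_1$ involves the variable $x$ at all.

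The main obstacle is the degenerate case where, for some specializations $y=s$, the polynomial $p_s(x)=P_1(x,s)$ becomes identically zero, which would give infinitely many solutions in $x$ and break the bound. Write $P_1(x,y)=\sum_{i=0}^{d} c_i(y)\,x^{i}$ with $c_i(y)\in\Ze[y]$ and $d=\deg_x P_1$. If $d\geq 1$, then the leading coefficient $c_d(y)$ is a nonzero polynomial in $y$, so $p_s(x)$ has degree exactly $d$ (hence at most $d$ roots) for all but finitely many $s$; for the finitely many exceptional $s$ where $c_d(s)=0$ I would recurse on the lower-degree polynomial, or simply note that a univariate integer polynomial that is not identically zero has at most its degree many roots, so for every $s$ outside the finite vanishing set of all the $c_i$ simultaneously the bound holds — and if $c_i(s)=0$ for all $i$ then $s$ lies in a finite set (the common zeros of the finitely many $c_i$), which we may handle by choosing $k$ large. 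The truly problematic subcase is $\deg_x P_1=0$, i.e.\ $P_1(x,y)=c_0(y)$ depends only on $y$; then $p_s(x)$ is constant in $x$ and is either never zero (no solutions, fine) or, for $s$ a root of $c_0$, identically zero in $x$ (every $x$ solves $P_1$). To rule this out I would use the full system: since $\deg P_1\geq 1$ forces $c_0(y)$ nonconstant, and since the other equations $P_2,\dots,P_m$ must simultaneously vanish, one argues that for fixed $s$ the remaining equations in $x$ cut the solution set down to finitely many points, unless \emph{every} $P_i(x,s)$ is identically zero in $x$ for that $s$. By reindexing I may assume some $P_i$ actually involves $x$; if \emph{no} $P_i$ involves $x$ then the system is really a system in $y$ alone and the claimed bound must instead be derived by symmetry (swapping the roles of $x$ and $y$, which is legitimate since the Main Theorem only requires parameterization by \emph{one} of the two variables). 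I expect the cleanest writeup to first treat the case where the system genuinely depends on $x$ (giving a bound $k=\max_i \deg_x P_i$ for all but finitely many $s$, absorbed into a larger $k$), and then observe that if the system depends only on $y$ we symmetrize; in either case the hypothesis of Theorem~\ref{theorem:main_theorem} is met, and the conclusion follows at once.
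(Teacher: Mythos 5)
Your overall strategy is the same as the paper's: verify the finiteness hypothesis of Theorem \ref{theorem:main_theorem} by bounding, for each fixed $s$, the number of roots of the specialized univariate polynomials $P_i(x,s)$. You also correctly spotted the point that needs care, namely specializations that vanish identically in $x$; but neither of your two proposed fixes actually closes that case, so the proof as written has a genuine gap. First, for an exceptional $s$ at which all the coefficients $c_i(s)$ vanish, the polynomial $p_s$ is identically zero and the system may have \emph{infinitely many} solutions in $x$; that there are only finitely many such $s$ is irrelevant, because the hypothesis of Theorem \ref{theorem:main_theorem} demands a single bound $k$ valid for \emph{every} $s$, and no choice of ``large $k$'' bounds an infinite set. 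Second, your reindexing/symmetry fallback only covers the situation in which no $P_i$ involves $x$ at all; it does not cover the case in which the $P_i$ genuinely involve $x$ but all of them specialize to the zero polynomial at one particular $s_0$. The single equation $P_1(x,y)=(x-1)(y-1)$ (degree $2$, so it satisfies the hypotheses of Theorem \ref{theorem:main2}) shows this case is real and that swapping variables does not help: parameterizing by $y$ fails at $y=1$ and parameterizing by $x$ fails at $x=1$, so the hypothesis of Theorem \ref{theorem:main_theorem} holds in \emph{neither} direction, contradicting your final claim that ``in either case the hypothesis is met.''

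The missing observation is that this degenerate case can never occur when there is anything left to prove: if for some $s_0\in\N$ every $P_i(x,s_0)$ is identically zero in $x$, then in particular $P_i(s_0,s_0)=0$ for all $i$, so $(s_0,s_0)$ is a constant solution and the equivalence holds trivially (exactly what happens for $(x-1)(y-1)$). So argue by contraposition: assume the system has \emph{no} constant solution; then for every $s$ at least one $P_i(x,s)$ is a nonzero polynomial of degree at most $\deg P_i$, hence the common solution set in $x$ has at most $k=\max_i\deg P_i$ elements, Theorem \ref{theorem:main_theorem} applies, and the absence of constant solutions forces the system not to be PR. With that one extra sentence your argument is complete. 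For what it is worth, the paper's own proof is silent on the same point: it asserts outright that each $\tilde P_{i,s}$ has at most $k$ roots, which is literally false for identically vanishing specializations, and the contraposition above is the cleanest repair for both write-ups.
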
 

\begin{proof} 
Our method allows us to prove a stronger result, namely: let $R$ be any infinite integral domain, let $S\subseteq R$ an infinite set, and let $P_1,\dots,P_m$ be polynomials on two variables over $R$; considering the system \eqref{equation:system1}, we prove that the following are equivalent:

\begin{enumerate}
\item The system $\sigma(x,y)=\boldsymbol{0}$ has a constant solution;
\item The system $\sigma(x,y)=\boldsymbol{0}$ is PR on $S$.
\end{enumerate}

$(1)\Rightarrow (2)$ This is trivial. 

$(2)\Rightarrow (1)$ For each $i\in[m]$ and $s\in S$, consider the monovariate polynomial $\tilde{P}_{i,s}(x) = P_i(x,s)$. Let $k=\max\{\operatorname{deg}\left(P_{i,s}\right): i\in [m]\}$. Then each $P_{i,s}(x)$ has at most $k$ roots over $R$. We can hence conclude the thesis by a direct application of Theorem \ref{theorem:main_theorem}.
\end{proof}

The previous result shows that being PR is equivalent to having at least one constant solution. As an immediate Corollary of Theorem \ref{theorem:main2}, we have the following characterization of being infinitely PR.

\begin{corollary} Under the same notations and hypotheses of Theorem \ref{theorem:main2}, the following are equivalent:
\begin{enumerate}
\item The system $\sigma(x,y)=\boldsymbol{s}$ is infinitely PR over $\N$;
\item $(x-y)$ divides $P_{1}\left(x,y\right),\dots,P_{m}\left(x,y\right)$.
\end{enumerate}
In particular, $x-y$ is the only irreducible infinitely PR polynomial in two variables. 
\end{corollary}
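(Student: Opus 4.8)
The plan is to reduce both implications to a single elementary fact: the constant solutions of the system are exactly the common roots in $\N$ of the univariate polynomials $p_i(t):=P_i(t,t)\in\Ze[t]$, and a nonzero polynomial over an integral domain has only finitely many roots. First I would unwind the definition of \emph{infinitely PR}: since a constant solution $(a,a)$ involves a single element $a$, it is monochromatic under every coloring; hence ``for every finite coloring there are infinitely many monochromatic constant solutions'' is equivalent to the plain statement that the system has infinitely many constant solutions (the nontrivial direction following by applying the hypothesis to the single-color coloring). This collapses statement (1) to the assertion that the set $\{a\in\N : p_i(a)=0 \text{ for all } i\}$ is infinite.

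For the implication $(2)\Rightarrow(1)$: if $(x-y)\mid P_i$ for every $i$, then $P_i(a,a)=0$ for all $a\in\N$, so every diagonal pair $(a,a)$ is a constant solution; there are infinitely many of them, each automatically monochromatic, so the system is infinitely PR. For $(1)\Rightarrow(2)$: assuming infinitely many constant solutions, the polynomial $p_i(t)=P_i(t,t)$ has infinitely many roots in $\N$ for each $i$, hence $p_i$ is the zero polynomial. I would then invoke the factor theorem: dividing $P_i(x,y)$ by $x-y$, which is monic in $x$, yields $P_i(x,y)=(x-y)Q_i(x,y)+P_i(y,y)$, and $p_i\equiv 0$ gives $(x-y)\mid P_i$ for every $i$.

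For the final ``in particular'' clause, I would apply the equivalence to a single irreducible $P\in\Ze[x,y]$: if $P$ is infinitely PR then $(x-y)\mid P$, and irreducibility forces $P$ to be associate to $x-y$ (i.e. $P=\pm(x-y)$, the units of $\Ze$ being $\pm1$); conversely $x-y$ is itself irreducible and infinitely PR by the first implication.

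I do not expect a genuine obstacle here: the entire content is the translation of the combinatorial hypothesis into the single algebraic statement ``infinitely many diagonal common roots'', after which everything is the standard ``finitely many roots / factor theorem'' machinery. The one point meriting care is the equivalence ``infinitely many monochromatic constant solutions for all colorings $\iff$ infinitely many constant solutions'', which relies on a constant solution being monochromatic by default; and, exactly as in Theorem \ref{theorem:main2}, the argument goes through verbatim for any infinite subset $S$ of an infinite integral domain, since both the bound on the number of roots of a nonzero univariate polynomial and the factor theorem for a divisor that is monic in one variable remain valid there.
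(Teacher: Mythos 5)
Your proof is correct, but it takes a genuinely more elementary route than the paper's. For $(1)\Rightarrow(2)$ the paper first recasts ``infinitely PR'' as the existence of a \emph{free} ultrafilter witness, then invokes Theorem \ref{theorem:main2} (hence the Main Theorem) together with the freeness of the witness to extract infinitely many constant solutions inside every member of the witness, and finally applies B\'ezout's theorem: the line $x=y$ meets each affine curve $P_i=0$ in infinitely many points, so $x-y$ divides each $P_i$. You bypass all of this machinery: reading the paper's definition of infinitely PR literally (infinitely many monochromatic \emph{constant} solutions under every coloring), the trivial one-color coloring collapses (1) to the statement that the diagonal set $\{a : P_i(a,a)=0 \text{ for all } i\}$ is infinite; then each univariate $P_i(t,t)$ has infinitely many roots in an integral domain, hence vanishes identically, and the factor theorem --- $P_i(x,y)=(x-y)Q_i(x,y)+P_i(y,y)$, division being licit since $x-y$ is monic in $x$ --- yields the divisibility. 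Your argument needs no ultrafilters, no B\'ezout, and in fact not even Theorem \ref{theorem:main2}, and it generalizes verbatim to infinite subsets of integral domains, exactly as the paper's does. The one thing the paper's heavier route buys is robustness under the stronger reading of ``infinitely PR'' in which the infinitely many monochromatic solutions are not required to be constant: under that reading your trivial-coloring step only produces infinitely many \emph{solutions}, and one genuinely needs the Main Theorem (its condition (3), or the free-witness argument the paper runs) to force infinitely many of them to be constant. With that single supplement your proof covers that reading too, so the discrepancy is one of definitional interpretation rather than a gap.
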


\begin{proof} As before, we can actually prove our theorem in the more general setting where $R$ is any infinite integral domain, $S\subseteq R$ is an infinite set, and $P_1,\dots,P_m$ are polynomials over $R$.

$(1)\Rightarrow (2)$ Let us first observe that a general system of functional equations $\sigma(x_1,\dots,x_n)=\boldsymbol{s}$ is infinitely PR over $S$ iff there is a free ultrafilter witness\footnote{A \emph{principal} ultrafilter on $S$ is an ultrafilter of the form $\{A\subseteq S\mid s\in A\}$; a \emph{free} ultrafilter on $S$ is an ultrafilter that is non-principal. In the case where $S$ is infinite, if $\U$ is free, then all elements of $\U$ are infinite.}. By Theorem \ref{theorem:main2} and the fact that $\U$ does not contain finite sets, the system $\sigma(x,y)=\boldsymbol{s}$ has an infinite amount of constant solutions in any $A\in\U$. Then we can apply B\'ezout Theorem: if $F$ is the algebraic closure of the fields of fractions of $R$, then it must be the case that $\{(x,y)\in S^2\mid x=y\}$ intersect the affine curve determined by the system $\sigma(x,y)=\boldsymbol{s}$ in an infinite amount of points, which consequently implies that $x-y$ divides each $P_i$.

$(2)\Rightarrow (1)$ This is trivial, as it entails that for all $r\in R \ (r,r)$ is a monochromatic solution of $\sigma(x,y)=0$. \end{proof}

Notice that there are plenty of examples of PR polynomials in two variables that are not infinitely PR: for example, for all $n\in\N$ the equation $2x-y=n$ is PR, as it has the constant solution $x=y=n$, but it is not infinitely PR as it does not have an infinite amount of solutions.

\subsection{PR of S-unit equations}
Let $K$ be a field of characteristic $0$. We denote by $K^{\times}$ the multiplicative group $(K\setminus\{0\}, \cdot)$. We say that an Abelian (multiplicatively written) group $\Gamma$ is \emph{finitely generated} if there are $r\in\N$ and $e_1,\dots,e_r\in\Gamma$ such that for all $g\in\Gamma$ one can find $m,n_1,\dots,n_k\in\Ze$ satisfying $x^m=e_1^{n_1}\dots e_k^{n_k}$. The smallest $r\in\N$ such that $\Gamma$ is generated by $r$ elements is called the \emph{rank} of $\Gamma$. We fix $\Gamma$ to be a multiplicative subgroup of $\Com^\times$ of rank $r$. An $S$-unit equation over $\Gamma$ is an equation of the form 
\begin{equation*}
    c_1x_1 + \cdots +c_n x_n = 1
\end{equation*}
where $c_1,\dots,c_n\in \Com^\times$ and unknowns $x_1,\dots,x_n$ are to be searched inside $\Gamma$. The theory of $S$-unit equations is an active research area in Diophantine number theory and its properties have been investigated since the beginning of the XX century. We refer to the book \cite{Evertse2015} as a general reference to the subject. We show below, as an application of Theorem \ref{theorem:main_theorem}, that Rado's Theorem fails to produce monochromatic solutions of linear equations in $3$ variables over $\Gamma$. For our purpose, we use the following results to allow us to use Theorem \ref{theorem:main_theorem} to study the PR of $S$-unit equations in 3 variables over $\Gamma$:

\begin{theorem}\cite[Theorem 1.1]{SchlickeweiSchmidt2000}, \cite[Corollary 6.4.5]{Evertse2015}\label{theorem:S-unit-equation}
    Let $G\subseteq \Com^{\times}\times \Com^{\times}$ be a finitely generated subgroup of rank $r$. Given $a,b\in \Com^{\times}$, the equation 
    \begin{equation*}
        ax+by=1
    \end{equation*}
    has at most $2^{8(r+2)}$ solutions $(x,y)\in G$.
\end{theorem}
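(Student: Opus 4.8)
The plan is to reduce the statement to a counting result over number fields and then invoke the quantitative Subspace Theorem, since the crucial feature is that the bound depends only on the rank $r$ and not on the coefficients $a,b$ or on the ambient group $G$. First I would exploit the fact that $G$ is finitely generated: the coordinates of its elements generate a finitely generated subfield of $\Com$, and by a specialization argument in the style of Evertse--Gy\H{o}ry one can, for any finite set of solutions of $ax+by=1$ in $G$, construct a ring homomorphism into a number field $K$ that is injective on that set. Such a specialization does not decrease the number of distinct solutions, so an upper bound valid over every number field of rank $\le r$ transfers to $\Com$ and simultaneously yields finiteness. The image of $G$ then sits inside a finitely generated subgroup of $(K^{\times})^2$ of rank $\le r$, which after enlarging the finite set of places $S$ may be taken to consist of $S$-units, so that the theory of heights becomes available.

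Next I would split the solutions according to their height. Each solution $(x,y)$ of $ax+by=1$ is automatically \emph{non-degenerate}: no proper subsum of $ax$, $by$, $-1$ can vanish, since $a,b,x,y\in\Com^{\times}$. Hence the degenerate locus that normally requires an inductive treatment is empty, and one only has to count non-degenerate solutions. The solutions of small height --- below an explicit threshold depending on $r$ and $|S|$ --- can be counted directly, because the $S$-units of bounded height in a fixed number field are finite in number with an explicit bound. The solutions of large height form the genuinely difficult part.

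For the large-height solutions I would appeal to the quantitative Subspace Theorem. Writing $(x,y)$ in terms of a basis of $G$ modulo torsion identifies each solution with a lattice point, and the relation $ax+by=1$ becomes a system of linear forms whose solutions of large height are confined to a bounded number of proper subspaces; the quantitative Subspace Theorem bounds this number in terms of $r$ and the dimension alone. A gap principle then controls how many solutions can lie on a single such subspace and within each archimedean cone. Summing the contributions of the small-height and large-height solutions, and tracking the constants through the quantitative Subspace Theorem and the gap principle, produces the explicit bound $2^{8(r+2)}$.

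The main obstacle is extracting the explicit numerical constant: finiteness itself is classical, but a bound depending only on $r$ --- uniform in $a,b$ and in the arithmetic of $G$ --- forces one to use the quantitative form of the Subspace Theorem together with a carefully optimized gap principle, and it is the bookkeeping of these two ingredients that yields the precise exponent $8(r+2)$. In the present paper, however, this theorem is invoked only as a black box: all that is needed downstream is the existence of a uniform finite bound $k=k(r)$ on the number of solutions, which is exactly the hypothesis required to apply Theorem \ref{theorem:main_theorem}.
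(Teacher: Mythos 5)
The first thing to say is that the paper contains no proof of this statement at all: it is imported verbatim from the literature (Schlickewei--Schmidt, and Corollary 6.4.5 of Evertse--Gy\H{o}ry) and used purely as a black box providing a bound $k=k(r)$ on the number of solutions, which is exactly what your closing paragraph observes. So there is no internal argument to compare yours against; what can be assessed is whether your sketch is a viable proof of the cited result.

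Judged on those terms, your outline is a faithful description of one known route --- the Evertse/Evertse--Schlickewei--Schmidt method via specialization to number fields, the (correct) observation that every solution of $ax+by=1$ with $a,b,x,y\in\Com^{\times}$ is automatically non-degenerate, a small/large height dichotomy, and the quantitative Subspace Theorem with a gap principle. That route does deliver what the paper actually needs: a finite bound depending only on $r$, uniform in $a$, $b$ and $G$. But your final claim, that tracking constants through the Subspace Theorem and the gap principle ``produces the explicit bound $2^{8(r+2)}$,'' is the genuine gap. The quantitative Subspace Theorem yields bounds of an entirely different (far larger) order of magnitude --- for $n$-term equations the Evertse--Schlickewei--Schmidt bound has the shape $\exp\left((6n)^{3n}(r+1)\right)$ --- and no amount of bookkeeping in that framework is known to compress to a single exponential like $2^{8(r+2)}$. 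The stated constant comes from a different method altogether: the Beukers--Schlickewei hypergeometric (Pad\'{e} approximation) argument, which is what Chapter 6 of Evertse--Gy\H{o}ry is built on. There one first shows that $x+y=1$ has at most $2^{8(r+1)}$ solutions in a rank-$r$ subgroup of $\Com^{\times}\times\Com^{\times}$, and then handles general coefficients not by carrying $a,b$ as coefficients of linear forms (as you do), but by the absorption trick: $(x,y)\mapsto(ax,by)$ injects the solution set of $ax+by=1$ in $G$ into the solution set of $X+Y=1$ in the group generated by $G$ and $(a,b)$, whose rank is at most $r+1$, giving $2^{8(r+2)}$ at once. So either you accept a weaker uniform bound (sufficient for this paper's application of Theorem \ref{theorem:main_theorem}) or you must abandon the Subspace Theorem for the hypergeometric method to reach the constant actually stated.
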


If $\Gamma$ has rank $r$, then the rank of $\Gamma\times \Gamma$ is $2r$. Hence, an $S$-unit equation over $\Gamma$ has at most $2^{16(r+1)}$ solutions. This result yields the following:

\begin{corollary}\label{corollary:S-unit-equations_Non_PR}
Given any $a,b,c\in\Com^\times$, the equation $ax+by+cz=0$ is PR over $\Gamma$ if and only if it has constant solutions, namely if and only if $a+b+c=0$.
\end{corollary}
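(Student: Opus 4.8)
The plan is to recognize $ax+by+cz=0$ as a single equation of the type covered by the Main Theorem, with $z$ playing the role of the distinguished parameterizing variable $x_{n+1}$ (so here $n=2$, $m=1$, $\boldsymbol{s}=0$), and then to verify the uniform-boundedness hypothesis by invoking the $S$-unit bound recalled just above. First I would fix $z=s$ for an arbitrary $s\in\Gamma$ and rewrite the resulting equation in the two unknowns $x,y$. Since $c\in\Com^\times$ and $s\in\Gamma\subseteq\Com^\times$ we have $cs\neq 0$, so dividing $ax+by=-cs$ by $-cs$ produces the genuine $S$-unit equation
\[
    \frac{a}{-cs}\,x+\frac{b}{-cs}\,y=1
\]
to be solved over $\Gamma$. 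By Theorem \ref{theorem:S-unit-equation} applied to $G=\Gamma\times\Gamma$, which has rank $2r$, this equation has at most $2^{16(r+1)}$ solutions $(x,y)\in\Gamma\times\Gamma$. The essential point is that this bound depends only on the rank of $\Gamma$, and neither on $s$ nor on the coefficients, so the number of solutions $(x,y)$ of $\sigma(x,y,s)=0$ is at most $k:=2^{16(r+1)}$ for \emph{every} $s\in\Gamma$; this is exactly the hypothesis required by Theorem \ref{theorem:main_theorem}.

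With the uniform bound in hand, I would apply Theorem \ref{theorem:main_theorem} to conclude that $ax+by+cz=0$ is PR over $\Gamma$ if and only if it admits a constant solution. Indeed, condition (3) of that theorem furnishes a $c_0$-monochromatic solution all of whose monochromatic companions are constant, hence a constant solution; the converse implication is immediate since constant solutions are monochromatic. It then remains only to identify the constant solutions: a constant solution is a triple $x=y=z=t$ with $t\in\Gamma$, for which the equation becomes $(a+b+c)t=0$. As $t\in\Gamma\subseteq\Com^\times$ forces $t\neq 0$, such a $t$ exists if and only if $a+b+c=0$ (in which case every $t\in\Gamma$ works, e.g.\ $t=1$). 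Chaining the two equivalences yields the stated characterization.

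I do not expect a genuinely hard step here, as the whole Diophantine content has already been absorbed into Theorem \ref{theorem:S-unit-equation}. The one point that warrants care is precisely the \emph{uniformity} of the solution count in the fixed value $s$: a single constant $k$ must serve for all $s$ simultaneously in order to feed Theorem \ref{theorem:main_theorem}, and this is guaranteed only because the $S$-unit bound is independent of the coefficients of the equation, so that the rescaling by $-cs$ does not degrade it. Everything else is routine.
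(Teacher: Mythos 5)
Your proposal is correct and follows essentially the same route as the paper's own proof: fix $z=s$, divide by $-cs$ to obtain a genuine $S$-unit equation over $\Gamma$, invoke Theorem \ref{theorem:S-unit-equation} for the bound $2^{16(r+1)}$ that is uniform in $s$ (and in the coefficients), and then feed this into Theorem \ref{theorem:main_theorem}. The only addition is your explicit verification that constant solutions correspond exactly to $a+b+c=0$, which the paper leaves implicit.
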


\begin{proof}
    Since $\Gamma$ is a multiplicative subgroup of $\Com^\times$, given any $s\in\Gamma$, solving $ax+by+cs=0$ in $\Gamma$ is equivalent to solving $a'x+b'y=1$, where $a'=-(a/cs)$ and $b'=-(b/cs)$. By Theorem \ref{theorem:S-unit-equation}, $a'x+b'y=1$ has a bounded number of solutions $k$ that only depends only on the rank of $\Gamma$. Hence, for all $s\in\Gamma$, the equation $ax+by+cs=0$ has at most $k$ solutions, so Theorem \ref{theorem:main_theorem} implies our thesis.
\end{proof}

Hence, the linear version of Rado's Theorem does not hold in any finite rank multiplicative subgroups of\footnote{By contrast, the group version of Rado's Theorem, with multiplication as operation, does hold for $\Gamma$, as it holds in any infinite group.} $\Gamma$ of $\Com^\times$. As a consequence, for example, $\Gamma$ will never contain arbitrarily long monochromatic arithmetic progressions; in fact, it cannot even contain monochromatic arithmetic progressions of length $3$, as these would form a solution of the linear equation $x+y-2z=0$.

\subsection{PR of Polyexponential Equations}

An algebraic number field is any finite field extension $K$ of $\Rat$, i.e. $K$ as a vector space over $\Rat$ has finite algebraic dimension; let $d$ be the degree of such extension, i.e. the algebraic dimension of $K$ over $\Rat$. As always, $K^\times$ denotes the multiplicative group $(K\setminus\{0\}, \cdot)$. Let $P_1,\dots,P_m\in K[x_1,\dots,x_n]$ be polynomials of degrees $d_1,\dots,d_m$, respectively.  Let $\boldsymbol{\alpha}_1,\dots,\boldsymbol{\alpha_m}\in (K^\times)^n$. 

The \emph{exponential polynomial} on the variables $\boldsymbol{x}=(x_1,\dots,x_n)$ over $\Ze$ with coefficients $P_1,\dots,P_m$ and characters $\boldsymbol{\alpha}_1,\dots,\boldsymbol{\alpha}_m$ is
\begin{equation*}\label{equation:exponential_polynomial_def}
    E(\boldsymbol{x}) = P_1(\boldsymbol{x})\boldsymbol{\alpha}_{1}^{\boldsymbol{x}}+\cdots P_m(\boldsymbol{x})\boldsymbol{\alpha}_{m}^{\boldsymbol{x}},\tag{$\star$}
\end{equation*}
where the \emph{exponential monomial} $\boldsymbol{\alpha}_{i}^{\boldsymbol{x}}$ is defined as
\begin{equation*}
    \boldsymbol{\alpha}_{i}^{\boldsymbol{x}}=\alpha_{i1}^{x_1}\cdots\alpha_{in}^{x_n}.
\end{equation*}
A solution of such an equation is any $\boldsymbol{a}\in\Ze^n$ such that $E(\boldsymbol{a})=0$. 

Henceforth we fix the characters $\boldsymbol{\alpha}_1,\dots,\boldsymbol{\alpha}_m\in\Ze^n$, polynomials in $n$ variables $P_1,\cdots,P_\in K[x_1,\dots,x_n]$, and functions $f_1,\dots,f_m:\Ze\to\Ze$; in this subsection we are interested in studying monochromatic solutions of the equation $E_{f_1,\dots,f_m}(\boldsymbol{x},y)=0$ over $\Ze$, where
\begin{equation*}
    E_{f_1,\dots,f_m}(\boldsymbol{x},y) = P_1(\boldsymbol{x})f_1(y)\alpha_{1}^{\boldsymbol{x}}+\cdots+P_m(\boldsymbol{x})f_m(y)\alpha_m^{\boldsymbol{x}}.
\end{equation*}
The basic theory about solutions of $E(\boldsymbol{x})=0$ can be found in \cite{SchlickeweiSchmidt2000}. We recall below some known facts from that article about such equations that will enable us to apply Theorem \ref{theorem:main_theorem} to study monochromatic solutions of $E_{f_1,\dots,f_k}(\boldsymbol{x},y)=0$.

Given a partition $\mathcal{P}$ of $[m]$ and $F\in\mathcal{P}$, let 
\begin{equation*}
    E_F(\boldsymbol{x})=\sum_{k\in F}P_k(\boldsymbol{x})\boldsymbol{\alpha}_{k}^{\boldsymbol{x}}.
\end{equation*}
We let $\sigma_\mathcal{P}(\boldsymbol{x})=\boldsymbol{0}$ to be the system consisting of all the equations of the form $E_F(\boldsymbol{x})=0$ as $F$ runs through $\mathcal{P}$. Clearly, if $\mathcal{Q}$ is a refinement\footnote{I.e. for all $F\in\mathcal{Q}$ one can find a $G\in\mathcal{P}$ satisfying $F\subseteq G$.} of the partition $\mathcal{P}$ then any solution of $\sigma_\mathcal{Q}(\boldsymbol{x})=\boldsymbol{0}$ is also a solution of $\sigma_\mathcal{P}(\boldsymbol{x})=\boldsymbol{0}$. In particular, a solution of $\sigma_\mathcal{P}(\boldsymbol{x})=\boldsymbol{0}$ is also a solution of the original equation $E(\boldsymbol{x})=0$; note also that a solution for the original equation has to be a solution for $\sigma_\mathcal{P}(\boldsymbol{x})=\boldsymbol{0}$ for some partition $\mathcal{P}$ of $[m]$. For a given $\mathcal{P}$, let $S(\mathcal{P})$ be the set of all solutions of $\sigma_\mathcal{P}(\boldsymbol{x})=\boldsymbol{0}$ that does not constitute a solution of $\sigma_\mathcal{Q}(\boldsymbol{x})=\boldsymbol{0}$ for any given proper refinement  $\mathcal{Q}$ of $\mathcal{P}$.

Given $\boldsymbol{\alpha}_{1},\dots,\boldsymbol{\alpha}_{m}\in\Ze^n$, $i,j\in[m]$ and a partition $\mathcal{P}$ of $[m]$, let $i\sim_\mathcal{P} j$ mean that there is a $F\in\mathcal{P}$ such that $i,j\in F$. Then the collection $G(\mathcal{P})$ of all $\boldsymbol{z}\in\Ze^n$ satisfying $\boldsymbol{\alpha}_i^{\boldsymbol{z}}=\boldsymbol{\alpha}_j^{\boldsymbol{z}}$ whenever $i\sim_\mathcal{P} j$, form an additive subgroup of $\Ze^n$. 

Define the constants 
\begin{equation*}
    A = \sum_{l=1}^{m}\binom{n+d_l}{n} \;\;\text{ and }\;\; B=\max(n,A).
\end{equation*}

\begin{theorem}\cite[Theorem 1]{SchlickeweiSchmidt2000}\label{theorem:number_sol_expo_pol_eq}
If $G(\mathcal{P})$ is trivial, then 
\begin{equation*}
    |S(\mathcal{P})| < 2^{35B^3}d^{6B^2}. 
\end{equation*}
\end{theorem}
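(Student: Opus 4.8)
The plan is to reduce the polynomial–exponential equation to a counting problem for a generalized linear (S-unit) equation and then invoke the quantitative form of Schmidt's Subspace Theorem. First I would expand each polynomial coefficient into monomials: writing $P_l(\boldsymbol{x})=\sum_{\boldsymbol{j}}c_{l,\boldsymbol{j}}\,\boldsymbol{x}^{\boldsymbol{j}}$, each block equation $E_F(\boldsymbol{x})=0$ becomes a vanishing $\Com$-linear combination of the terms $\boldsymbol{x}^{\boldsymbol{j}}\boldsymbol{\alpha}_k^{\boldsymbol{x}}$ with $k\in F$ and $\boldsymbol{j}$ ranging over the exponents occurring in $P_k$. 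Since a polynomial of degree $d_l$ in $n$ variables has at most $\binom{n+d_l}{n}$ monomials, the total number of distinct terms appearing across all blocks is at most $A=\sum_{l=1}^m\binom{n+d_l}{n}$, which is exactly why this quantity (and $B=\max(n,A)$) should govern the final bound.

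Next I would isolate the two non-degeneracy inputs that the hypotheses provide. The triviality of $G(\mathcal{P})$ says that the simultaneous relations $\boldsymbol{\alpha}_i^{\boldsymbol{z}}=\boldsymbol{\alpha}_j^{\boldsymbol{z}}$ for $i\sim_{\mathcal{P}}j$ cut out only $\boldsymbol{z}=\boldsymbol{0}$, so the characters within a common block define genuinely distinct exponential functions on $\Ze^n$ and the terms $\boldsymbol{x}^{\boldsymbol{j}}\boldsymbol{\alpha}_k^{\boldsymbol{x}}$ are multiplicatively independent in the relevant sense. The defining property of $S(\mathcal{P})$ — that a solution admits no vanishing proper refinement — translates into the statement that, for the solutions we must count, no proper subsum of the terms in any block vanishes. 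Together these are precisely the non-degeneracy conditions under which the Subspace Theorem yields a finite, explicitly bounded count.

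With each block rewritten as $\sum_{\nu}\lambda_\nu t_\nu(\boldsymbol{x})=0$ in at most $A$ terms $t_\nu(\boldsymbol{x})=\boldsymbol{x}^{\boldsymbol{j}_\nu}\boldsymbol{\alpha}_{k_\nu}^{\boldsymbol{x}}$, I would pass to the projective point $(t_1(\boldsymbol{x}):\cdots:t_A(\boldsymbol{x}))$ and apply the quantitative Subspace Theorem, in the Evertse--Schlickewei--Schmidt formulation that bounds the number of subspaces, to the system of inequalities governing these coordinates. The key heuristic is that the purely exponential part $\boldsymbol{\alpha}_k^{\boldsymbol{x}}$ contributes height growing linearly in $|\boldsymbol{x}|$ while each monomial factor $\boldsymbol{x}^{\boldsymbol{j}}$ contributes only logarithmically; hence the polynomial twists perturb the heights negligibly, and the genuinely arithmetic content remains that of a unit equation in a group of finite rank. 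The Subspace Theorem then confines the solutions to a bounded number of proper subspaces, and on each such subspace the triviality of $G(\mathcal{P})$ rules out the positive-dimensional families of solutions that could otherwise occur, leaving only finitely many.

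The main obstacle is entirely quantitative and twofold. First, one must feed in the correct absolute (field-independent) version of the quantitative Subspace Theorem so that the degree $d=[K:\Rat]$ enters only through factors of the shape $d^{6B^2}$; this requires the sharpest available bounds on the number of subspaces rather than a soft finiteness statement. Second, the combinatorial bookkeeping — summing the per-subspace counts over all $\le A$ terms and over the linear-algebra configurations that arise — must be organized so as to telescope into the clean form $2^{35B^3}d^{6B^2}$. Since we only use this estimate qualitatively, as the uniform bound $k$ needed to invoke Theorem \ref{theorem:main_theorem}, I would be content to cite \cite{SchlickeweiSchmidt2000} for the precise constants once the reduction above is in place.
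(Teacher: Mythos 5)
This statement is not proven in the paper at all: it is Theorem 1 of Schlickewei--Schmidt, quoted verbatim with the citation \cite{SchlickeweiSchmidt2000}, and it enters the paper purely as a black box providing a uniform bound on the number of solutions. Your proposal, read to its end, does the same thing --- after the reduction sketch you explicitly fall back on citing \cite{SchlickeweiSchmidt2000} for the actual bound --- so in effect your approach coincides with the paper's: both rest on the citation, and for the purposes of this paper (where only the qualitative fact of a uniform bound is used, via Theorem \ref{theorem:main_theorem}) that is the correct and intended move.

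That said, do not mistake the interpolated sketch for a proof. It correctly identifies the architecture of the Schlickewei--Schmidt argument: expanding the $P_l$ into at most $\binom{n+d_l}{n}$ monomials (whence the constant $A$ and then $B$), reading the triviality of $G(\mathcal{P})$ and the no-vanishing-proper-refinement property of $S(\mathcal{P})$ as the non-degeneracy hypotheses, and invoking the absolute quantitative Subspace Theorem. But the three places you flag as ``quantitative bookkeeping'' are exactly where the substance of their paper lies: (i) the claim that the monomial factors $\boldsymbol{x}^{\boldsymbol{j}}$ only ``perturb heights negligibly'' needs a genuine argument, since the twisted terms are no longer S-units and one cannot literally apply unit-equation counts to them; (ii) the step ``on each exceptional subspace, triviality of $G(\mathcal{P})$ leaves only finitely many solutions'' hides a descent --- solutions lying in a proper subspace satisfy a new relation with fewer terms, and one must track how the partition structure and the associated group change under this reduction, which is an induction, not a one-line observation; and (iii) the constants $2^{35B^3}d^{6B^2}$ are outputs of that induction combined with the Evertse--Schlickewei subspace counts, not of generic telescoping. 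None of this affects your standing in the present context, since a citation is all that is required here; it only means your write-up should present itself as a citation plus an expository gloss, not as a proof.
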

In particular, if $G(\mathcal{P})$ is trivial for all partitions of $[m]$ then the initial equation admits a uniform bound on the number of solutions, namely  $B_m\cdot 2^{35B^3}d^{6B^2}$, where $B_m$ is the $m$-th Bell number (which is equal to the number of partitions of $[m]$), given by the recurrence relation $B_0=1$ and 
\begin{equation*}
    B_{k+1} = \sum_{l=0}^{k}\binom{k}{l}B_l.
\end{equation*}

Also observe that, if all the polynomials $P_1,\dots,P_m$ are constants, then $B=\max\{m,n\}$. For our purposes, the only fact that matters is that, when $G(\mathcal{P})$ is trivial for all partitions $\mathcal{P}$ of $[m]$, the amount of solutions is bounded by a constant that only depends on the number of variables, namely: the degree of $K$, the degree of the polynomials, and the number of monomials present in the equation. Hence, a direct consequence of Theorems \ref{theorem:main_theorem} and \ref{theorem:number_sol_expo_pol_eq} is:

\begin{theorem}\label{theorem:PR_exponential_equation}
    If the group $G(\mathcal{P})$ is trivial for all possible partitions of $[m]$, then the equation $E_{f_1,\dots,f_m}(\boldsymbol{x},y)=0$ is partition regular over $\Ze$ if and only if it admits constant solutions.  
\end{theorem}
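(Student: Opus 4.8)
The plan is to realize $E_{f_1,\dots,f_m}(\boldsymbol{x},y)=0$ as a single functional equation over $S=\Ze$ in the $n+1$ variables $x_1,\dots,x_n,y$, with $y$ playing the role of the distinguished $(n+1)$-th variable of Theorem \ref{theorem:main_theorem}, and then to verify the uniform-solution-bound hypothesis of that theorem. One direction is immediate: if the equation admits a constant solution, then this solution is monochromatic under every coloring, so the equation is PR. For the converse, everything reduces to producing a $k\in\N$ such that, for every $s\in\Ze$, the specialized equation $E_{f_1,\dots,f_m}(\boldsymbol{x},s)=0$ has at most $k$ solutions $\boldsymbol{x}\in\Ze^n$; granting this, Theorem \ref{theorem:main_theorem} applies and its condition (3) furnishes a coloring with a monochromatic solution that is necessarily constant, i.e. a constant solution.

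To produce the bound, fix $s\in\Ze$ and observe that
\[
E_{f_1,\dots,f_m}(\boldsymbol{x},s)=\sum_{i=1}^{m}\bigl(f_i(s)P_i(\boldsymbol{x})\bigr)\boldsymbol{\alpha}_i^{\boldsymbol{x}}
\]
is again an exponential polynomial in $\boldsymbol{x}$, now with coefficient polynomials $f_i(s)P_i$ of degree at most $d_i$ and with \emph{exactly the same characters} $\boldsymbol{\alpha}_1,\dots,\boldsymbol{\alpha}_m$. This is the crucial observation: the groups $G(\mathcal{P})$ are defined purely in terms of the characters, so they are untouched by the specialization $y=s$ and remain trivial for every partition $\mathcal{P}$ of $[m]$; likewise the number of variables $n$, the number of monomials $m$, the degree $d$ of $K$, and the constants $A$ and $B$ are all independent of $s$. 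Consequently Theorem \ref{theorem:number_sol_expo_pol_eq} together with the Bell-number estimate following it bound the number of solutions of $E_{f_1,\dots,f_m}(\boldsymbol{x},s)=0$ by $k:=B_m\cdot 2^{35B^3}d^{6B^2}$, a quantity that does not depend on $s$.

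The delicate point, which I expect to be the main obstacle, is uniformity at the exceptional values of $s$ where some $f_i(s)$ vanish: there the corresponding monomials drop out and $E_{f_1,\dots,f_m}(\boldsymbol{x},s)=0$ becomes an exponential polynomial supported on a proper subset $J\subseteq[m]$, so one must check that the finiteness hypothesis is inherited by these reduced equations. The natural mechanism is that a partition of $J$ extends to a partition of $[m]$ by taking the indices outside $J$ as singletons, and that this extension does not alter the within-block constraints on the characters; verifying that the triviality of the relevant $G(\mathcal{P})$ genuinely transfers to these reductions (and hence that the bound $k$ is preserved) is the step requiring real care. The fully degenerate case, in which every $f_i(s_0)=0$ for some $s_0$, is handled separately and is harmless: then $(s_0,\dots,s_0,s_0)$ is itself a constant solution, so both sides of the asserted equivalence hold. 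Once the uniform bound is secured across all $s$, the hypotheses of Theorem \ref{theorem:main_theorem} are met with $S=\Ze$ and distinguished variable $y$, and the equivalence between partition regularity over $\Ze$ and the existence of a constant solution follows at once.
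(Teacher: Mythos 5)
Your proposal follows exactly the route the paper takes: the paper presents this theorem as a direct consequence of Theorem \ref{theorem:main_theorem} and Theorem \ref{theorem:number_sol_expo_pol_eq}, the point being precisely the one you identify, namely that specializing $y=s$ leaves the characters (hence the groups $G(\mathcal{P})$, and the constants $B$, $d$, $m$, $n$) unchanged, so the Schlickewei--Schmidt count gives a solution bound uniform in $s$ and the Main Theorem applies with $y$ as the distinguished variable. Your additional discussion of the exceptional values of $s$ --- extending partitions of the surviving support by singletons when some $f_i(s)=0$, and observing that if every $f_i(s_0)=0$ then $(s_0,\dots,s_0,s_0)$ is already a constant solution so both sides of the equivalence hold --- is care the paper does not spell out, and it is handled correctly.
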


An immediate example of such equations occurs when $\alpha_{11},\dots,\alpha_{mn}$ are mutually coprimes; in this case we have that, for each distinct $k,l\in[m]$ and $\boldsymbol{z}\in\Ze^n$, $\boldsymbol{\alpha}_{k}^{\boldsymbol{z}}=\boldsymbol{\alpha}_{l}^{\boldsymbol{z}}$ if and only if $\boldsymbol{z}=\boldsymbol{0}$.

\begin{corollary}\label{corollary:PR_exponential_coprimes}
Let $(\alpha_{lk})_{l=1,k=1}^{m,n}$ be a collection of mutually coprime integers, $P_1,\dots,P_m\in\Rat[x_1,\dots,x_n]$ and $f_1,\dots,f_m:\Ze\to\Ze$. For each $l\in[m]$ let $\boldsymbol{\alpha}=(\alpha_{l1},\dots,\alpha_{ln})$. For each $i\in[m]$, let $A_i(w)=P_i(w,\dots,w)f_i(w)$ and $a_i = \alpha_{i1}\cdots\alpha_{in}$. Then the polynomial exponential equation 
\begin{equation*}\label{equation:polyexpo_coprimes}
    P_1(\boldsymbol{x})f_1(y)\boldsymbol{\alpha}_{1}^{\boldsymbol{x}}+\dots+P_m(\boldsymbol{x})f_m(y)\boldsymbol{\alpha}_{m}^{\boldsymbol{x}} = 0
\end{equation*}
is partition regular over $\Ze$ if and only if there exists an $s\in\Ze$ such that 
\begin{equation*}
    a_1^s A_1(s) + \cdots + a_m^s A_m(s) = 0.
\end{equation*}
\end{corollary}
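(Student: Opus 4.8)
The plan is to read this corollary as nothing more than the explicit, mutually coprime specialization of Theorem \ref{theorem:PR_exponential_equation}. Accordingly the argument splits into two essentially independent tasks: first, checking that the coprimality hypothesis licenses an application of Theorem \ref{theorem:PR_exponential_equation}; and second, unwinding what ``admits a constant solution'' means for the concrete equation displayed in the statement. Since Theorem \ref{theorem:PR_exponential_equation} is already available to us, almost all of the genuine content—the uniform bound on the number of $\boldsymbol{x}$ for each fixed $y$, furnished by Theorem \ref{theorem:number_sol_expo_pol_eq} and feeding Theorem \ref{theorem:main_theorem}—is imported wholesale, and the proof itself reduces to a hypothesis check followed by bookkeeping.

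For the first task I would invoke the observation recorded immediately before the statement: when $\alpha_{11},\dots,\alpha_{mn}$ are mutually coprime, then for distinct $k,l\in[m]$ and $\boldsymbol{z}\in\Ze^n$ one has $\boldsymbol{\alpha}_k^{\boldsymbol{z}}=\boldsymbol{\alpha}_l^{\boldsymbol{z}}$ if and only if $\boldsymbol{z}=\boldsymbol{0}$. Consequently, any partition $\mathcal{P}$ of $[m]$ possessing a block $F$ containing two distinct indices $i,j$ forces every $\boldsymbol{z}\in G(\mathcal{P})$ to satisfy $\boldsymbol{\alpha}_i^{\boldsymbol{z}}=\boldsymbol{\alpha}_j^{\boldsymbol{z}}$, whence $\boldsymbol{z}=\boldsymbol{0}$; thus $G(\mathcal{P})$ is trivial. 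This is precisely the hypothesis of Theorem \ref{theorem:PR_exponential_equation}, which then yields that $E_{f_1,\dots,f_m}(\boldsymbol{x},y)=0$ is partition regular over $\Ze$ if and only if it admits a constant solution. The one point I would flag as deserving care is that fixing $y=s$ leaves the characters $\boldsymbol{\alpha}_i$ untouched, so the groups $G(\mathcal{P})$—and hence the Schlickewei--Schmidt bound—are identical for every $s$, giving the single constant $k$ that Theorem \ref{theorem:main_theorem} demands.

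The second task is a direct substitution. A constant solution of $E_{f_1,\dots,f_m}(\boldsymbol{x},y)=0$ is a tuple with $x_1=\dots=x_n=y=s$ for some $s\in\Ze$. Using $\boldsymbol{\alpha}_i^{(s,\dots,s)}=\alpha_{i1}^s\cdots\alpha_{in}^s=(\alpha_{i1}\cdots\alpha_{in})^s=a_i^s$ and $P_i(s,\dots,s)f_i(s)=A_i(s)$, evaluation at such a tuple turns the equation into $\sum_{i=1}^m a_i^s A_i(s)=0$. Hence the existence of a constant solution is exactly the existence of an $s\in\Ze$ with $a_1^s A_1(s)+\cdots+a_m^s A_m(s)=0$, and combining this identification with the equivalence from the first task gives the statement.

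There is no serious obstacle once Theorem \ref{theorem:PR_exponential_equation} is in hand; the only subtlety worth a sentence is a degenerate fiber. If for some $s_0$ all $f_i(s_0)$ vanish, then $E_{f_1,\dots,f_m}(\boldsymbol{x},s_0)\equiv 0$, which one must confirm is harmless: such an $s_0$ makes every $A_i(s_0)=0$, so it already furnishes a constant solution and the stated equivalence remains correct, the bounded-fiber count being handled inside the machinery of Theorem \ref{theorem:PR_exponential_equation}. Thus the delicate verification is concentrated in the triviality of $G(\mathcal{P})$ and the uniformity of the solution bound, both of which the coprimality hypothesis secures.
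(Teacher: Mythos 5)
Your proposal is correct and takes essentially the same route as the paper, which offers no separate proof of this corollary: it is presented as an immediate consequence of Theorem \ref{theorem:PR_exponential_equation} together with the remark preceding it (mutual coprimality forces $\boldsymbol{\alpha}_k^{\boldsymbol{z}}=\boldsymbol{\alpha}_l^{\boldsymbol{z}}$ only at $\boldsymbol{z}=\boldsymbol{0}$, hence $G(\mathcal{P})$ is trivial) plus the routine substitution $x_1=\dots=x_n=y=s$ yielding $a_1^sA_1(s)+\cdots+a_m^sA_m(s)=0$, which are exactly your two tasks. The one imprecision you inherit from the paper rather than introduce is that for the partition of $[m]$ into singletons the group $G(\mathcal{P})$ is all of $\Ze^n$, so the hypothesis of Theorem \ref{theorem:PR_exponential_equation} must be read as quantifying over partitions having at least one block with two distinct indices; with that reading, your verification coincides with the paper's intended argument.
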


Another class of polyexponential equations we can study is the class of equations the form 
\begin{equation*}\label{equation:poly_exp_2}
    P_1(\boldsymbol{x},y)\boldsymbol{\alpha}_{1}^{\boldsymbol{x}}+\dots+P_m(\boldsymbol{x},y)\boldsymbol{\alpha}_{m}^{\boldsymbol{x}} = 0
\end{equation*}
where $P_1,\dots,P_m\in K[x_1,\dots,x_n,y]$. For a fixed list of such polynomials, characters, and an $s\in\Ze$, the equation 
\begin{equation*}\label{equation:poly_exp_2_1}
    P_1(\boldsymbol{x},s)\boldsymbol{\alpha}_{1}^{\boldsymbol{x}}+\dots+P_m(\boldsymbol{x},s)\boldsymbol{\alpha}_{m}^{\boldsymbol{x}} = 0\tag{$\star\star$}
\end{equation*}
is of the form of the Equation \eqref{equation:exponential_polynomial_def}. Thus, if the group $G(\mathcal{P})$ is trivial for all partitions of $[m]$, Theorem \ref{theorem:number_sol_expo_pol_eq} guarantees that the number of solutions of the Equation \eqref{equation:poly_exp_2_1} is bounded uniformly, i.e. independently from the values of $s$. Hence, by Theorem \ref{theorem:main_theorem}, we prove the following:

\begin{theorem}\label{theorem:PR_exponential_equation_}
    If the group $G(\mathcal{P})$ is trivial for all possible partitions of $[m]$, then the equation 
\begin{equation*}\label{equation:poly_exp_3}
    P_1(\boldsymbol{x},y)\boldsymbol{\alpha}_{1}^{\boldsymbol{x}}+\dots+P_m(\boldsymbol{x},y)\boldsymbol{\alpha}_{m}^{\boldsymbol{x}} = 0
\end{equation*} 
is partition regular over $\Ze$ if and only if it admits constant solutions.  
\end{theorem}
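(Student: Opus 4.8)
The plan is to deduce the statement from the Main Theorem (Theorem \ref{theorem:main_theorem}) applied with $S=\Ze$, letting the variable $y$ play the role of the distinguished $(n+1)$-th variable. Concretely, what must be checked is the hypothesis of that theorem: that there is a single constant $k$ bounding, uniformly in the value of the last variable, the number of solutions in $\boldsymbol{x}=(x_1,\dots,x_n)$ once that last variable is fixed. Once this is established, the equivalence between partition regularity over $\Ze$ and the existence of a constant solution follows at once.

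First I would fix an arbitrary $s\in\Ze$ and substitute $y=s$, obtaining the equation \eqref{equation:poly_exp_2_1}, which is an exponential polynomial of the form \eqref{equation:exponential_polynomial_def} in the variables $\boldsymbol{x}$: its characters are the same $\boldsymbol{\alpha}_1,\dots,\boldsymbol{\alpha}_m$ as before, and its coefficient polynomials are $P_i(\boldsymbol{x},s)\in K[x_1,\dots,x_n]$. Thus the machinery recalled in Theorem \ref{theorem:number_sol_expo_pol_eq} applies to each such specialization.

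The heart of the argument is to show that the resulting bound on the number of solutions does not depend on $s$. I would argue this in two steps. On the one hand, the group $G(\mathcal{P})$ attached to a partition $\mathcal{P}$ of $[m]$ is defined purely in terms of the characters $\boldsymbol{\alpha}_1,\dots,\boldsymbol{\alpha}_m$, which are fixed throughout; hence the hypothesis that $G(\mathcal{P})$ is trivial for every partition of $[m]$ is unaffected by the substitution $y=s$. On the other hand, the constants $A$ and $B=\max(n,A)$ entering the bound of Theorem \ref{theorem:number_sol_expo_pol_eq} depend only on $n$ and on the degrees of the coefficient polynomials; since the degree of $P_i(\boldsymbol{x},s)$ as a polynomial in $\boldsymbol{x}$ never exceeds the partial degree of $P_i$ in the variables $\boldsymbol{x}$, these constants admit an upper bound independent of $s$. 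Summing the estimate of Theorem \ref{theorem:number_sol_expo_pol_eq} over the finitely many partitions of $[m]$ therefore produces a single constant $k$, depending only on $n$, on $d=[K:\Rat]$, and on the partial degrees of the $P_i$ in $\boldsymbol{x}$, such that \eqref{equation:poly_exp_2_1} has at most $k$ solutions $\boldsymbol{x}\in\Ze^n$ for every $s\in\Ze$.

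This is precisely the hypothesis of Theorem \ref{theorem:main_theorem}, and applying it yields the desired equivalence. The step I expect to require the most care is the uniformity just discussed: one must be sure that specializing $y=s$ cannot raise the relevant degrees, so that the bound furnished by Theorem \ref{theorem:number_sol_expo_pol_eq} is genuinely independent of $s$; everything else is a direct invocation of the Main Theorem.
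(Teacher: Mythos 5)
Your proposal is correct and follows exactly the paper's own route: specialize $y=s$ to reduce to an equation of the form \eqref{equation:exponential_polynomial_def}, invoke Theorem \ref{theorem:number_sol_expo_pol_eq} (noting $G(\mathcal{P})$ depends only on the fixed characters) to get a solution bound uniform in $s$, and then apply Theorem \ref{theorem:main_theorem}. The only difference is that you spell out the uniformity argument --- that specialization cannot raise the degrees in $\boldsymbol{x}$, so the constants $A$ and $B$ are independent of $s$ --- which the paper asserts without detail; this is a welcome addition rather than a divergence.
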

As before, a basic consequence of the above Theorem is when the characters are coprime:
\begin{corollary}\label{corollary:polyexp_2}
    Let $(\alpha_{lk})_{l=1,k=1}^{m,n}$ be a collection of mutually coprime integers and $P_1,\dots,P_m\in\Rat[x_1,\dots,x_n,y]$. For each $l\in[m]$ let $\boldsymbol{\alpha}=(\alpha_{l1},\dots,\alpha_{ln})$. For each $i\in[m]$, let $\tilde{P}_i(w)=P_i(w,\dots,w)$ and $a_i = \alpha_{i1}\cdots\alpha_{in}$. Then the polynomial exponential equation 
\begin{equation*}
    P_1(\boldsymbol{x},y)\boldsymbol{\alpha}_{1}^{\boldsymbol{x}}+\dots+P_m(\boldsymbol{x},y)\boldsymbol{\alpha}_{m}^{\boldsymbol{x}} = 0
\end{equation*}
is partition regular over $\Ze$ if and only if there exists $s\in\Ze$ such that 
\begin{equation*}
    a_1^s \tilde{P}_1(s) + \cdots + a_m^s \tilde{P}_m(s) = 0.
\end{equation*}
\end{corollary}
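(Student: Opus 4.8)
The plan is to obtain this as a direct specialization of Theorem~\ref{theorem:PR_exponential_equation_}: the pairwise coprimality of the $\alpha_{lk}$ is exactly what guarantees the non-degeneracy hypothesis ``$G(\mathcal{P})$ trivial for all partitions of $[m]$'', so that theorem applies, and it then remains only to unwind what ``admits constant solutions'' means for the equation at hand. I would therefore split the work into two short verifications, leaning on the fact that all the analytic content is already absorbed into Theorems~\ref{theorem:number_sol_expo_pol_eq} and~\ref{theorem:PR_exponential_equation_}.

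First I would check the hypothesis. Fix distinct $i,j\in[m]$ and suppose $\boldsymbol{\alpha}_i^{\boldsymbol{z}}=\boldsymbol{\alpha}_j^{\boldsymbol{z}}$ for some $\boldsymbol{z}=(z_1,\dots,z_n)\in\Ze^n$; writing this as $\prod_{k=1}^n\alpha_{ik}^{z_k}=\prod_{k=1}^n\alpha_{jk}^{z_k}$ and comparing prime factorizations, the pairwise coprimality of the integers $\alpha_{lk}$ forces the exponent of each prime to vanish on both sides, whence $\boldsymbol{z}=\boldsymbol{0}$. This is precisely the observation recorded immediately before the statement. Consequently, whenever a partition $\mathcal{P}$ has a block containing two distinct indices $i\sim_{\mathcal{P}}j$, one has $G(\mathcal{P})\subseteq\{\boldsymbol{z}\in\Ze^n:\boldsymbol{\alpha}_i^{\boldsymbol{z}}=\boldsymbol{\alpha}_j^{\boldsymbol{z}}\}=\{\boldsymbol{0}\}$, so $G(\mathcal{P})$ is trivial; this is the hypothesis under which Theorem~\ref{theorem:PR_exponential_equation_} reduces partition regularity over $\Ze$ to the existence of a constant solution.

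Second I would unwind the constant-solution condition by direct substitution. A constant solution is a point with $x_1=\dots=x_n=y=s$ for some $s\in\Ze$. By definition of $a_i$ and $\tilde{P}_i$ we have $\boldsymbol{\alpha}_i^{(s,\dots,s)}=\prod_{k=1}^n\alpha_{ik}^{s}=a_i^{s}$ and $P_i(s,\dots,s)=\tilde{P}_i(s)$, so evaluating the left-hand side of the equation at such a point gives exactly $\sum_{i=1}^m a_i^{s}\tilde{P}_i(s)$. Hence a constant solution exists if and only if $a_1^s\tilde{P}_1(s)+\cdots+a_m^s\tilde{P}_m(s)=0$ for some $s\in\Ze$, and combining this with the reduction of the previous paragraph yields the claimed equivalence.

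The conceptual difficulty is not in this corollary at all: it is entirely carried by Theorem~\ref{theorem:PR_exponential_equation_}, whose force is the uniform, $s$-independent bound of Theorem~\ref{theorem:number_sol_expo_pol_eq} on the number of $\boldsymbol{x}$-solutions of the sliced equation \eqref{equation:poly_exp_2_1}, this being what lets the variable $y$ play the role of the parameterizing variable $x_{n+1}$ in the Main Theorem~\ref{theorem:main_theorem}. Relative to that machinery, the only genuine computation is the unique-factorization argument for the triviality of $G(\mathcal{P})$, and the single point deserving care is that a constant solution must set $y=s$ along with the $x_k$, so that each coefficient polynomial $P_i(\boldsymbol{x},y)$ is evaluated at $(s,\dots,s)$ and contributes $\tilde{P}_i(s)$ rather than being left free.
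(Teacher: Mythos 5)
Your proposal is correct and follows exactly the route the paper intends: the paper presents this corollary as an immediate consequence of Theorem~\ref{theorem:PR_exponential_equation_}, using its own prior observation that mutual coprimality of the $\alpha_{lk}$ forces $\boldsymbol{\alpha}_i^{\boldsymbol{z}}=\boldsymbol{\alpha}_j^{\boldsymbol{z}}$ only at $\boldsymbol{z}=\boldsymbol{0}$ (hence $G(\mathcal{P})$ trivial for every partition), and then identifying constant solutions $x_1=\dots=x_n=y=s$ with the condition $a_1^s\tilde{P}_1(s)+\cdots+a_m^s\tilde{P}_m(s)=0$. Your two verifications are precisely the steps the paper leaves implicit, so there is nothing to add.
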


\begin{example}
    Let $P_1(x,y,z) = xy-z+2$,  $P_2(x,y,z)=x-y+2z+2$,  and  $P_3(x,y,z)=xy-z+3$. Then the polyexponential equation 
\begin{equation*}
    P_1(x,y,z)2^x3^y + P_2(x,y,z)5^x7^y+P_3(x,y,z)11^x13^y = 0
\end{equation*}
is not PR over $\Ze$. Indeed, if the contrary happens, there exists $s\in\Ze$ such that
\begin{equation*}\label{equation:example_polyexp}
    6^s\Tilde{P}_1(s) + 35^s\Tilde{P}_2(s)+143^s\Tilde{P}_3(s)=0.\tag{$\dagger$}
\end{equation*}
Since
\begin{equation*}
    \Tilde{P}_1(w)=w^2-w+2, \;\; \Tilde{P}_2(w)=2w+2\;\;\text{ and }\;\;\Tilde{P}_3(w)=w^2-w+3,
\end{equation*}
we have that $2$ divides $\Tilde{P}_1(n)$ and $\Tilde{P}_2(n)$ for all $n\in\Ze$. Nevertheless, if there is a common prime factor $p$ of $\Tilde{P}_1(s),\Tilde{P}_2(s)$ and $\Tilde{P}_3(s)$, then we would have 
\begin{equation*}
    x^2-x+2\equiv x^2-x+3 \mod p,
\end{equation*}
implying that $2\equiv 3\mod p$, which is absurd. Hence, 
\begin{equation*}
    d=\gcd(\Tilde{P}_1(s),\Tilde{P}_2(s))>1
\end{equation*}
does not divide $\Tilde{P}_3(s)$. By B\'{e}zoult's Lemma, this a contradiction with the equation \eqref{equation:example_polyexp}.
\end{example}

\subsection*{Acknowledgements}
L. Luperi Baglini was supported by the project PRIN 2022 "Logical methods in combinatorics", 2022BXH4R5, MIUR (Italian Ministry of University and Research). The authors thank the anonymous reviewers for their helpful comments and suggestions.

\normalsize
\baselineskip=17pt

\end{document}